\documentclass[10pt,reqno]{amsart}

\usepackage{fullpage,amsmath,amsthm,amsfonts,amssymb,hyperref,enumitem,amsaddr}
\usepackage[T1]{fontenc}
\usepackage[utf8]{inputenc}
\usepackage{mathtools}
\usepackage{aliascnt}

\setlist[enumerate]{font=\normalfont}

\numberwithin{equation}{section}

\hypersetup{colorlinks=true,linkcolor=blue,citecolor=blue,urlcolor=blue}

\newtheorem{thm}{Theorem}[section]

\newaliascnt{lem}{thm}
\newtheorem{lem}[lem]{Lemma}
\aliascntresetthe{lem}

\newaliascnt{qst}{thm}
\newtheorem{qst}[qst]{Question}
\aliascntresetthe{qst}

\newaliascnt{cor}{thm}
\newtheorem{cor}[cor]{Corollary}
\aliascntresetthe{cor}

\newaliascnt{prop}{thm}
\newtheorem{prop}[prop]{Proposition}
\aliascntresetthe{prop}

\newtheorem*{mainA}{Theorem A}
\newtheorem*{mainB}{Theorem B}
\newtheorem*{mainC}{Theorem C}

\theoremstyle{definition}
\newaliascnt{rem}{thm}
\newtheorem{rem}[rem]{Remark}
\aliascntresetthe{rem}

\newaliascnt{conv}{thm}
\newtheorem{conv}[conv]{Convention}
\aliascntresetthe{conv}

\newaliascnt{defn}{thm}
\newtheorem{defn}[defn]{Definition}
\aliascntresetthe{defn}

\usepackage[nameinlink,capitalize]{cleveref}
\crefname{thm}{Theorem}{Theorems}
\Crefname{thm}{Theorem}{Theorems}
\crefname{prop}{Proposition}{Propositions}
\Crefname{prop}{Proposition}{Propositions}
\crefname{lem}{Lemma}{Lemmas}
\Crefname{lem}{Lemma}{Lemmas}
\crefname{cor}{Corollary}{Corollaries}
\Crefname{cor}{Corollary}{Corollaries}
\crefname{rem}{Remark}{Remarks}
\Crefname{rem}{Remark}{Remarks}
\crefname{defn}{Definition}{Definitions}
\Crefname{defn}{Definition}{Definitions}
\crefname{qst}{Question}{Questions}
\Crefname{qst}{Question}{Questions}
\crefname{conv}{Convention}{Conventions}
\Crefname{conv}{Convention}{Conventions}

\newcommand{\Z}{\mathbb Z}
\newcommand{\Aut}{\operatorname{Aut}}
\newcommand{\Out}{\operatorname{Out}}
\newcommand{\SAut}{\operatorname{SAut}}
\newcommand{\SOut}{\operatorname{SOut}}
\newcommand{\GL}{\operatorname{GL}}
\newcommand{\SL}{\operatorname{SL}}
\newcommand{\Inn}{\operatorname{Inn}}
\newcommand{\tr}{\operatorname{tr}}
\newcommand{\diag}{\operatorname{diag}}
\newcommand{\eps}{\varepsilon}
\newcommand{\Xpm}{X^{\pm1}}

\title[Nielsen classes in outer automorphism groups]{Nielsen classes in outer automorphism groups of free groups}

\author{Ilya Kapovich}

\address{Department of Mathematics and Statistics, Hunter College of CUNY\newline
  \indent 695 Park Ave, New York, NY 10065, U.S.A.
  \newline \indent e-mail \texttt{ik535@hunter.cuny.edu}
  \newline\indent ORCID 0000-0002-7694-6236
  }

\subjclass[2020]{Primary 20F28; Secondary 20E05, 20F05}
\keywords{free group, outer automorphism group, Nielsen equivalence, generating pair, stabilization, relation module}
\date{}

\begin{document}

\begin{abstract}
For every $n\ge 8$, we construct explicit families of generating pairs of
$\GL(n,\Z)$ and $\SL(n,\Z)$ representing countably infinitely many Nielsen
equivalence classes.  These pairs arise as the homology images of explicit
torsion generating pairs in $\Out(F_n)$ and its index-two subgroup
$\SOut(F_n)$, yielding countably infinitely many Nielsen classes of generating
pairs in both outer groups.  Within each constructed outer family, all pairs
become Nielsen equivalent after one stabilization, obtained by adjoining a
trivial coordinate.  We derive a relation-module obstruction to Nielsen
equivalence of once-stabilized generating tuples.  Evans's non-cancellation
examples show that this obstruction is nontrivial and can distinguish Nielsen
classes of once-stabilized generating tuples.  We also record a quotient
relation-module refinement for possible future applications.
\end{abstract}

\maketitle

\section{Introduction}\label{sec:introduction}

Recall that for a group $G$ and $n\ge 1$, two $n$-tuples $\mathbf g=(g_1,\dots,g_n)$ and $\mathbf g'=(g_1',\dots,g_n')$ of elements of $G$ are \emph{Nielsen equivalent} in $G$, $\mathbf g\sim_N\mathbf g'$,  if $\mathbf g$ can be transformed to $\mathbf g'$ by a finite chain of \emph{elementary Nielsen moves}: inverting an entry of a tuple, interchanging two entries in a tuple, and multiplying an entry in a tuple on the right by an entry in a different position. See \cref{def:nielsen-equivalence} for a precise definition. If two tuples $\mathbf g=(g_1,\dots,g_n)$ and $\mathbf g'=(g_1',\dots,g_n')$  are Nielsen equivalent in $G$, they generate the same subgroup of $G$, $\langle g_1,\dots, g_n\rangle=\langle g_1',\dots, g_n'\rangle$. For this reason Nielsen equivalence provides a useful tool for studying the subgroup structure of various groups.

The study of Nielsen equivalence goes back to Jakob Nielsen's foundational
work on free groups~\cite{Nielsen,Nielsen1921,Nielsen1924}.  If $F_n=F(x_1,\ldots,x_n)$ and $r\ge n$, Nielsen's
reduction theory implies that every generating $r$-tuple is Nielsen equivalent
to
\[
 (x_1,\ldots,x_n,\underbrace{1,\ldots,1}_{r-n\text{ entries}}).
\]
Thus $F_n$ has a single Nielsen class of generating $r$-tuples for every
$r\ge n$; see \cite{Nielsen,Nielsen1924}.  For a general finitely generated
group $G$, Nielsen equivalence is the orbit relation for
\[
 \Aut(F_r)\curvearrowright\operatorname{Epi}(F_r,G),
\]
and the orbit structure can be much more complicated.

For a finitely generated group $G$, an $n$-tuple $\mathbf g=(g_1,\dots,g_n)\in G^n$ is \emph{generating} if $G=\langle g_1,\dots, g_n\rangle$. For a nontrivial finitely generated group $G$, the smallest $n\ge 1$ such that there exists a generating $n$-tuple for $G$ is called the \emph{rank} of $G$ and denoted $d(G)$. For the trivial group $G=\{1\}$ we set $d(G)=0$.

Studying Nielsen equivalence of generating tuples for finitely generated groups is a challenging but illuminating task. 
Finite groups already exhibit nontrivial phenomena.
B.~H. Neumann and Hanna Neumann found nontrivial examples; in particular,
$A_5$ has three Nielsen classes of generating pairs
\cite{NeumannNeumann1951,McCulloughWanderley}.  In contrast, the Wiegold
conjecture predicts that every generating $n$-tuple of a nonabelian finite
simple group is Nielsen equivalent when $n\ge 3$.  The conjecture is known for several
families, including classical results of Gilman and Evans
\cite{Gilman,EvansFinite}, and is closely related to connectivity questions for
Nielsen graphs and product-replacement graphs
\cite{MyropolskaNagnibeda,AvniGarion}.  Product-replacement algorithms utilize random walks on Nielsen graphs to produce pseudo-random elements of finite groups; see
\cite{PeresTanakaZhai} for quantitative mixing results for the
product-replacement chain and \cite{LeedhamGreenRattle} for a recent variant
whose output converges to the uniform distribution.  For $n=2$, finite simple
groups may have several Nielsen classes and several $T$-systems; see
\cite{EvansFinite,McCulloughWanderley}.

Nielsen methods also play a substantial role in geometric group theory.
Nielsen reduction for groups acting on hyperbolic spaces controls short
generators and the geometry of generating tuples \cite{KWHyperbolic}.  Delzant
\cite{Delzant1991} proved that a torsion-free word-hyperbolic group has only
finitely many conjugacy classes of one-ended two-generated subgroups.  For
finitely generated torsion-free groups, one-ended is equivalent to freely
indecomposable and neither trivial nor infinite cyclic.  Delzant's proof
implicitly gives the corresponding finiteness, up to simultaneous conjugation,
of Nielsen classes of pairs generating such subgroups.  This follows explicitly
from Theorem~1.3 of Kapovich and Weidmann \cite{KWFreely}: with $l=1$ and tuple length $2$,
every such pair is Nielsen equivalent, after simultaneous conjugation, to a pair
in a fixed finite ball.  More generally, if all $l$-generated subgroups of an
almost torsion-free word-hyperbolic group, in the sense of \cite{KWFreely}, are quasiconvex, the same theorem with tuple length
$l+1$ gives finiteness, up to simultaneous conjugation, of Nielsen classes of
$(l+1)$-tuples generating non-elementary freely indecomposable subgroups.  In
particular, a two-generated torsion-free one-ended word-hyperbolic group has
only finitely many ordinary Nielsen classes of generating pairs, since
simultaneous conjugation of a generating pair is induced by an inner
automorphism of $F_2$.  Kapovich and Weidmann later proved that every
torsion-free word-hyperbolic Kleinian group has only finitely many Nielsen
classes of generating $n$-tuples for each fixed $n\ge 1$ \cite{KWKleinian}.

There are also strong uniqueness results.  Extending the classical picture for free
groups, Louder~\cite{Louder} proved that if $G$ is the fundamental group of a
closed surface of negative Euler characteristic, then for every $n\ge d(G)$ any
two generating $n$-tuples of $G$ are Nielsen equivalent.  Kapovich and Schupp
\cite{KapovichSchuppGeneric} constructed exponentially generic classes of
$m$-generator $q$-relator presentations defining torsion-free one-ended
word-hyperbolic groups of rank $m$ for which every $m$-tuple generating a
non-free subgroup is Nielsen equivalent to the given tuple
$(a_1,\ldots,a_m)$.  Thus there is a single Nielsen class of such tuples and the
only non-free $m$-generated subgroup is the whole group.  This Nielsen
uniqueness, combined with generic-case properties of Whitehead's algorithm, is
a key ingredient in the isomorphism-rigidity results of Kapovich, Schupp and
Shpilrain \cite{KapovichSchuppShpilrain}; for generic one-relator groups,
isomorphism implies that the Cayley graphs for the given generating sets are
isometric.  Conversely, Kapovich and Weidmann constructed small-cancellation
examples with generating tuples that remain Nielsen inequivalent after one
stabilization \cite{KWsmall}, and later, for every $n\ge 2$, a torsion-free
one-ended word-hyperbolic group of rank $n$ with two generating $n$-tuples that
remain inequivalent after $n-1$ stabilizations \cite{KWrandom}.  Their
small-cancellation framework also yields finitely presented word-hyperbolic
groups for which Nielsen equivalence of generating tuples is undecidable
\cite{KWsmall}.  These results illustrate the additional difficulty of the
problem for tuples of length at least $3$.

There is a parallel topological motivation from $3$-manifolds.  A Heegaard
splitting determines generating systems of the fundamental group, and isotopic
splittings give Nielsen-equivalent systems.  This has been used to distinguish
Heegaard splittings, beginning with work of Moriah on Seifert fibered spaces
\cite{Moriah1988}.  Lustig and Moriah used Nielsen equivalence in Fuchsian
groups for the same purpose \cite{LustigMoriah1991}, developed
Reidemeister--Whitehead and Nielsen torsion invariants
\cite{LustigMoriahTorsion}, applied related methods to hyperbolic
$3$-manifolds \cite{LustigMoriah1997}, and more recently classified minimal
generating systems in a broad class of Fuchsian groups
\cite{LustigMoriah2022}.

A related algebraic line uses Fox derivatives, relation modules, and algebraic
$K$-theory.  Lustig viewed Nielsen equivalence and simple-homotopy equivalence
as dimension-one and dimension-two analogues and developed invariants that
distinguish Nielsen classes in metabelian and cocompact Fuchsian groups
\cite{Lustig1991}.  This viewpoint is closely related to the relation-module
obstruction to stabilization developed in \cref{sec:relation-modules}.

The case of generating pairs is exceptional.  Nielsen's description of $\Aut(F_2)$ implies
that every automorphism of $F_2=F(x,y)$ sends $[x,y]$ to a conjugate of
$[x,y]^{\pm1}$.  Hence, for every group $G$,
\[
 (g_1,g_2)\sim_N(h_1,h_2)
 \quad\Longrightarrow\quad
 [g_1,g_2]\text{ is conjugate to }[h_1,h_2]^{\pm1}.
\]
B.~H. and Hanna Neumann exploited this rank-two device
\cite{NeumannNeumann1951}; it is closely related to the Higman invariant, and
we refer to it informally as the \emph{Neumann commutator trick}.  No
comparably simple invariant is available for general tuples of length at least $3$.  The same
commutator mechanism is central in recent work of Hirose and Monden, who prove
that the mapping class group of a closed orientable surface of genus $g\ge 8$
has infinitely many Nielsen classes of generating pairs \cite{HiroseMonden};
see also \cite{Monden}.

For linear groups, the Nielsen problem has been studied most explicitly in
rank one and over finite fields.  McCullough and Wanderley analyzed generating
pairs of $\SL(2,q)$ using the Higman invariant and commutator traces
\cite{McCulloughWanderley}, while Hirose and Monden proved that
$\SL(2,\Z)\cong\mathcal M_1$, the orientation-preserving mapping class group of the torus, has only finitely many Nielsen classes of
generating pairs \cite{HiroseMonden}.  In higher integral rank, Conder,
Liversidge and Vsemirnov give a recent account and new families of generating
pairs of $\SL(n,\Z)$ \cite{ConderLiversidgeVsemirnov}.  To the best of our
knowledge, it has not previously been explicitly observed that, for fixed
higher rank, $\GL(n,\Z)$ or $\SL(n,\Z)$ has infinitely many Nielsen classes of
generating pairs.  By comparison, Hirose and Monden's theorem implies the
corresponding infinitude for $\operatorname{Sp}(2g,\Z)$ when $g\ge 8$
\cite{HiroseMonden}.

Our first main result gives the integral general and special linear analogues.

\begin{mainA}
Let $n\ge 8$.  The following statements hold.
\begin{enumerate}[label=\textup{(\alph*)}]
\item The group $\GL(n,\Z)$ has countably infinitely many Nielsen equivalence
classes of generating pairs.  There exist pairwise Nielsen-inequivalent
generating pairs
\[
 (Z_-,H_m),\qquad m\ge1,
\]
such that
\[
 |H_m|=60,
 \qquad
 |Z_-|=
 \begin{cases}
 n,&n\text{ even},\\
 2n,&n\text{ odd}.
 \end{cases}
\]
\item The group $\SL(n,\Z)$ has countably infinitely many Nielsen equivalence
classes of generating pairs.  There exist pairwise Nielsen-inequivalent
generating pairs
\[
 (Z_+,H_m),\qquad m\ge1,
\]
such that
\[
 |H_m|=60,
 \qquad
 |Z_+|=
 \begin{cases}
 n,&n\text{ odd},\\
 2n,&n\text{ even}.
 \end{cases}
\]
\end{enumerate}
\end{mainA}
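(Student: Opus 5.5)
Here is the plan, which follows the Hirose--Monden template and distinguishes Nielsen classes by the Neumann commutator trick. If $(Z,H)\sim_N(Z,H')$, then $[Z,H]$ is conjugate in $\GL(n,\Z)$ to $[Z,H']^{\pm1}$. Conjugate matrices have the same characteristic polynomial, and $M$ and $M^{-1}$ have reciprocal characteristic polynomials. So the conjugacy-invariant quantity
\[
 t(Z,H)=\{\tr [Z,H],\ \tr [Z,H]^{-1}\}
\]
is a Nielsen invariant of generating pairs. It therefore suffices to produce generating pairs $(Z_\pm,H_m)$ for which $\tr[Z_\pm,H_m]$ takes infinitely many values as $m$ varies, or more conveniently grows unboundedly in $m$. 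Passing to a subsequence then gives pairwise inequivalent pairs. Countability is automatic because $\GL(n,\Z)$ is countable.

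\textbf{Choice of generators.}
\begin{enumerate}
\item Let $Z_-$ be the signed cyclic permutation matrix sending $e_i\mapsto e_{i+1}$ for $i<n$ and $e_n\mapsto -e_1$, or the unsigned one, with the sign chosen to land in $\GL$ with determinant $-1$ or in $\SL$ as required.
\begin{itemize}
\item The unsigned $n$-cycle has order $n$ and determinant $(-1)^{n-1}$.
\item The signed version has order $2n$ and determinant $(-1)^n$.
\end{itemize}
This gives exactly the stated parity dichotomy: $Z_-$ (determinant $-1$) has order $n$ for $n$ even, and $Z_+$ (determinant $1$) has order $n$ for $n$ odd.
\item Take $H_m=P_mAP_m^{-1}$, where $A$ is a fixed copy of $A_5$, order $60$, embedded in $\SL(n,\Z)$ block-diagonally. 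For instance, use the $5$-dimensional permutation representation twisted to have determinant $1$, which needs $n\ge 5$, placed on coordinates $1,\dots,5$. Here $P_m$ is an elementary matrix or a power of a transvection, such as $E_{ij}^m$, chosen so that $H_m$ still has order $60$.
\end{enumerate}
Since the paper frames these as homology images of torsion pairs in $\Out(F_n)$, I would realize $Z$ as the image of a cyclic-permutation automorphism and $H_m$ as the image of a finite-order outer automorphism. Then $[Z,H_m]$ has entries that are explicit polynomials in $m$.

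\textbf{Steps.}
\begin{enumerate}
\item[(i)] Verify the orders of $Z_\pm$ and $H_m$ and their determinants, which is routine.
\item[(ii)] Show that $\langle Z_\pm,H_m\rangle$ is all of $\GL(n,\Z)$, respectively $\SL(n,\Z)$. I would use the standard fact that $\SL(n,\Z)$ is generated by the elementary matrices $E_{ij}$, or, for $n\ge3$, by a single $E_{12}$ together with the permutation matrices up to sign. Conjugating $E_{12}$ by powers of the $n$-cycle moves it around the cyclic adjacency, and the finite group $H_m$ provides the remaining permutations. So the task is to extract one elementary transvection and enough signed permutations (the alternating group on the coordinates, which together with the $n$-cycle generates $S_n$ or $A_n$) from words in $Z$ and $H_m$. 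For $\GL$ one then adds a determinant $-1$ element, namely $Z_-$.
\item[(iii)] Compute $\tr[Z,H_m]$ as a function of $m$. Because $P_m$ is unipotent and the conjugation is by a transvection, the trace is a polynomial in $m$, quadratic I would expect. It suffices to show that this polynomial is nonconstant, and likewise for the trace of the inverse. This then gives infinitely many values.
\end{enumerate}

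\textbf{Main obstacle.} The hard part is step (ii), generation for every $m$ simultaneously. The condition $n\ge 8$ suggests the construction needs room, for instance an $A_5$ block plus extra coordinates on which $Z$ and the conjugates $H_m^{Z^k}$ interact. My first attempt would be:
\begin{enumerate}
\item Find a short word $w$ with $w(Z,H_m)=E_{ij}^{\pm1}$, or a transvection with coprime off-diagonal entries, using the fact that a commutator of two disjoint-support conjugates of $H_m$ isolates the $P_m$ perturbation.
\item Show that the resulting subgroup contains $\SL(n,\Z)$ by the Mennicke/Bass--Milnor--Serre type fact that transvections together with a transitive signed permutation group generate.
\end{enumerate}
If step (ii) resists, the fallback is to choose $H_m$ so that $\langle Z,H_m\rangle$ visibly contains $\langle Z,H_1\rangle$-generators up to a unipotent correction, and to apply Nielsen moves to reduce to the $m=1$ case.
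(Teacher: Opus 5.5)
\textbf{Verdict: there is a genuine gap.} Your $H_m$ cannot exist as described, and your step (ii) has no way to prove generation uniformly in $m$.

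\textbf{What matches the paper.} Your Nielsen invariant is the paper's: the commutator up to conjugacy and inversion, detected by the unordered pair $\{\tr[Z,H],\tr[Z,H]^{-1}\}$. Your signed cyclic permutations $Z_\pm$, with their order and determinant bookkeeping, are the paper's $Z_\delta$.

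\textbf{The construction of $H_m$ fails.} The statement asks for a single matrix $H_m$ of order $60$. $A_5$ is a \emph{group} of order $60$ whose elements have orders $1,2,3,5$, so nothing in your $5$-dimensional block has order $60$. In fact no element of $\GL(k,\Z)$ with $k<8$ has order $60$. Its minimal polynomial is a product of distinct cyclotomic polynomials $\Phi_d$ whose indices have least common multiple $60$. Covering the prime powers $4,3,5$ costs degree at least $\varphi(4)+\varphi(3)+\varphi(5)=8$, with $\varphi$ Euler's function. This is where $n\ge 8$ comes from. The paper takes $H_m=\diag(A,B,D_m,I_{n-8})$ with commuting blocks of orders $4$, $3$, $5$. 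It puts the parameter only into the order-$5$ block, via $D_m=U_mPU_m^{-1}$ with $U_m$ a power of a transvection.

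\textbf{The missing idea for step (ii).} That packing is exactly what your generation argument needs; as written, step (ii) has no mechanism that works for all $m$ at once. Your fallback of reducing to $m=1$ by Nielsen moves is self-defeating: if it succeeded, the pairs would be Nielsen equivalent, contradicting step (iii). Because the block orders are pairwise coprime, $H_m^{45}=\diag(A,I_{n-2})$ and $H_m^{40}=\diag(I_2,B,I_{n-4})$ do not depend on $m$.
\begin{enumerate}
\item The first is a signed transposition. Together with $Z_+$ (resp.\ $Z_-$) it generates the determinant-one signed permutation matrices (resp.\ all signed permutation matrices).
\item The second is a signed permutation times the elementary matrix $e_4\mapsto e_3+e_4$. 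So one elementary matrix lies in $\langle Z_\pm,H_m\rangle$, and its conjugates by signed permutations give all elementary matrices. This yields $\SL(n,\Z)$, and since $\det Z_-=-1$, also $\GL(n,\Z)$.
\end{enumerate}
The paper runs this argument in $\Aut(F_n)$ with Gersten's generators and then projects by $\rho$; the matrix version is the same.

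\textbf{Step (iii) needs an actual computation.} Nonconstancy of the trace is not automatic, and for the paper's matrices the dependence is cubic, not quadratic. For $n\ge 9$ one gets $\tr K_{\delta,m}=-m^3+m^2+n-7$ and $\tr K_{\delta,m}^{-1}=2m^2+n-6$. For $n=8$ these are $-m^3+m^2+3$ and $2m^2+3$. These values separate all $m\ne\ell$ directly, so no passage to a subsequence is needed.
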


The matrices $Z_-$, $Z_+$, and $H_m$ occurring in Theorem~A are defined
explicitly in \cref{def:homology-matrices}.  Theorem~A is proved as
\cref{thm:linear-nielsen} by combining these explicit matrix families with the
rank-two commutator invariant and explicit trace calculations.

The automorphism groups of free groups also have classical small generating
sets.  B.~H. Neumann proved that $\Aut(F_n)$ is two-generated for $n\ge4$
\cite{BHN}; Newman treated ranks $2$ and $3$ \cite{MFN}.  Hence $\Out(F_n)$ is
two-generated for every $n\ge 2$, but the Nielsen classification of its
generating pairs appears to be much less understood.  Let
$\rho:\Out(F_n)\twoheadrightarrow\GL(n,\Z)$ be the homomorphism induced by
abelianization, and put
\[
 \SOut(F_n)=\rho^{-1}(\SL(n,\Z))
 =\ker\bigl(\det\circ\rho:\Out(F_n)\longrightarrow\{\pm1\}\bigr).
\]

\begin{mainB}
Let $n\ge 8$.  The following statements hold.
\begin{enumerate}[label=\textup{(\alph*)}]
\item The group $\Out(F_n)$ has countably infinitely many Nielsen equivalence
classes of generating pairs.  There exist pairwise Nielsen-inequivalent
generating pairs $(a_m,b_m)$, $m\ge1$, such that
\[
 |b_m|=60,
 \qquad
 |a_m|=
 \begin{cases}
 n,&n\text{ even},\\
 2n,&n\text{ odd}.
 \end{cases}
\]
Moreover, $a_m=a_{m'}$ for all $m,m'\ge 1$.
\item The group $\SOut(F_n)$ has countably infinitely many Nielsen equivalence
classes of generating pairs.  There exist pairwise Nielsen-inequivalent
generating pairs $(a_m^+,b_m^+)$, $m\ge1$, such that
\[
 |b_m^+|=60,
 \qquad
 |a_m^+|=
 \begin{cases}
 n,&n\text{ odd},\\
 2n,&n\text{ even}.
 \end{cases}
\]
Moreover, $a_m^+=a_{m'}^+$ for all $m,m'\ge 1$.
\end{enumerate}
\end{mainB}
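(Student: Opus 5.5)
Theorem B follows from Theorem A by lifting: one constructs torsion elements of $\Out(F_n)$ whose images under $\rho$ are the matrices $Z_\pm$, $H_m$, shows that these lifts generate, and then transfers Nielsen inequivalence down through $\rho$.

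\emph{Step 1: the lifts.} The $n$-cycle $x_1\mapsto x_2\mapsto\cdots\mapsto x_n\mapsto x_1$, possibly composed with inverting one letter, gives an automorphism $a$ of order $n$ or $2n$. Its image under $\rho$ is a signed permutation matrix, and this should be $Z_-$ or $Z_+$. Any finite subgroup of $\GL(n,\Z)$ of signed permutation type lifts isomorphically to $\Aut(F_n)$ via permutations of the letters and their inverses. The order-$60$ matrices $H_m$ presumably come from an action of $A_5\times C_{\ldots}$ or $A_5$ conjugated by an $m$-dependent unipotent matrix. I would realize each $H_m$ as $\rho(b_m)$, with $b_m$ a graph automorphism of a suitable marked graph of rank $n$, or as a conjugate $u_m b u_m^{-1}$ with $u_m$ a lift of the conjugating matrix (e.g.\ a power of a transvection $x_i\mapsto x_ix_j$). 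Orders are preserved because $\rho$ is injective on finite subgroups of $\Out(F_n)$ (the kernel $\mathrm{IA}_n$ is torsion-free, by Baumslag--Taylor). So $|a_m|=|Z_\pm|$ and $|b_m|=60$, and $a_m=a$ is independent of $m$.

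\emph{Step 2: generation.} This is the main obstacle. Since $\rho$ maps $\langle a,b_m\rangle$ onto $\GL(n,\Z)$ (resp.\ $\SL(n,\Z)$) by Theorem~A, it suffices to show that $\langle a,b_m\rangle$ contains $\mathrm{IA}_n$ modulo inner automorphisms. I would first try to exhibit, via explicit words in $a$ and $b_m$, a single Nielsen move, e.g.\ the right transvection $\rho_{12}\colon x_1\mapsto x_1x_2$. Conjugating $\rho_{12}$ by the powers of the $n$-cycle $a$, and by the elements of the $A_5$ spanned by $b_m$, would then yield all $\rho_{ij}$ and $\lambda_{ij}$. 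Together with the signed permutations these generate $\Aut(F_n)$, hence its image in $\Out$ (or in $\SOut$ for the $+$ version). The hypothesis $n\ge8$ should leave enough room for the $A_5$ block plus spare coordinates, so that commutators such as $[\rho_{ij},\rho_{jk}]=\rho_{ik}^{\pm}$ appear. If a direct word is elusive, the alternative is a normal-closure argument: the group $\langle a,b_m\rangle$ surjects onto $\GL(n,\Z)$ and contains some nontrivial element of $\mathrm{IA}_n$ (produced as a relator of the matrices, evaluated on the lifts). One would then invoke a result that a subgroup of $\Out(F_n)$ mapping onto $\GL$ and containing a suitable $\mathrm{IA}$ element contains all of $\mathrm{IA}_n$. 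I expect to need the explicit word approach.

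\emph{Step 3: inequivalence.} A Nielsen equivalence $(a_m,b_m)\sim_N(a_{m'},b_{m'})$ in $\Out(F_n)$ pushes forward under the homomorphism $\rho$ to a Nielsen equivalence $(Z_-,H_m)\sim_N(Z_-,H_{m'})$ in $\GL(n,\Z)$, since elementary moves commute with homomorphisms. This contradicts Theorem~A(a). The same argument for $\SOut(F_n)\to\SL(n,\Z)$, using Theorem~A(b), gives part~(b). Countability is automatic, since $\Out(F_n)$ is countable.
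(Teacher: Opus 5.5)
Your Step~3 is exactly the paper's argument. Step~1 is essentially sound: your lift of the signed $n$-cycle is the paper's $\zeta_\pm$, and the order argument via torsion-freeness of $IO_n$ works, although the paper only needs $|\omega_m|=60$ in $\Aut(F_n)$ together with $|\rho(\omega_m)|=|H_m|=60$.

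The genuine gap is Step~2, which is the real content of Theorem~B beyond Theorem~A: you never show that your pairs generate $\Out(F_n)$ or $\SOut(F_n)$.
\begin{itemize}
\item No word producing a transvection is exhibited.
\item The normal-closure alternative rests on an unnamed ``result'' that a subgroup surjecting onto $\GL(n,\Z)$ and containing some IA element must contain $\mathrm{IA}_n$. You neither state this precisely nor cite it, and it is not a standard theorem.
\item Your reduction cites Theorem~A for surjectivity onto $\GL(n,\Z)$. In the paper that generation claim is itself deduced from generation in $\Out(F_n)$ (\cref{cor:generation-out-sout}), so the reduction is circular.
\item Your guess about $b_m$ cannot support the planned conjugations. $b_m$ is a single element, so it spans a cyclic group of order $60$; moreover $A_5$ has no element of order $60$.
\end{itemize}

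The missing idea is coprime-order packing. The paper takes $b_m=\overline{\omega}_m$ with $\omega_m=\alpha\beta\gamma_m$. The factors $\alpha,\beta,\gamma_m$ are supported on the disjoint free factors $F(x_1,x_2)$, $F(x_3,x_4)$, $F(x_5,\ldots,x_8)$, have pairwise coprime orders $4,3,5$, and commute. The parameter enters only through $\gamma_m=\mu^m\gamma\mu^{-m}$. The generation argument then runs as follows:
\begin{itemize}
\item By coprimality, $\omega_m^{45}=\alpha$ and $\omega_m^{40}=\beta$ lie in $\langle\zeta_\delta,\omega_m\rangle$.
\item The order-$4$ factor $\alpha$ together with the signed $n$-cycle generates $W_n^+$, or $W_n$ when $\delta=-1$.
\item Writing $\beta=\nu\lambda$ with $\lambda\in W_n^+$ produces the Gersten generator $\nu=E_{x_4^{-1},x_3^{-1}}$.
\item Since $\chi E_{ab}\chi^{-1}=E_{\chi(a),\chi(b)}$ and $W_n^+$ acts transitively on admissible pairs, all of $\mathfrak E_n$ is obtained, hence $\SAut(F_n)$. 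Adjoining $\zeta_-$ then gives $\Aut(F_n)$.
\end{itemize}
Generation is therefore independent of $m$ and never requires analyzing $\mathrm{IA}_n$. This construction, not the inequivalence step, is what your proposal is missing.
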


The generating pairs in Theorem~B, together with the noncyclic homology
quotients $\GL(n,\Z)$ and $\SL(n,\Z)$, imply
\[
 d(\Out(F_n))=d(\SOut(F_n))=2
\]
for $n\ge 8$.  

We are not aware of a reference explicitly recording that $\SOut(F_n)$ is
two-generated.  For $n=1$ the group $\SOut(F_1)$ is trivial, while
$\SOut(F_2)\cong\SL(2,\Z)$ is classically two-generated.  For completeness,
in \cref{thm:special-two-generated} we record a short proof that
$\SAut(F_n)$ and $\SOut(F_n)$ are two-generated for every $n\ge 3$.
For $n\ge4$, the corresponding assertion for $\SAut(F_n)$ follows readily
from the classical Nielsen--Gersten generating description of
$\SAut(F_n)$~\cite{Gersten,SatohTwistedSecond}; we handle the case $n=3$ by
a separate explicit argument.  

Theorems~A and~B arise from one construction.  We place automorphisms of
orders $4$, $3$, and $5$ on disjoint free factors; coprimality recovers the
order-$4$ and order-$3$ factors by fixed powers, while a parameter is stored in
the order-$5$ factor.  Together with a signed cyclic basis permutation and
Gersten's Nielsen generators, these elements generate $\Aut(F_n)$ or
$\SAut(F_n)$.  On homology the construction gives the explicit matrix pairs
$(Z_-,H_m)$ and $(Z_+,H_m)$ introduced in \cref{def:homology-matrices}; their
commutator traces distinguish their Nielsen classes.  The linear inequivalence
then implies the corresponding outer inequivalence.  This argument proves Theorem~A as
\cref{thm:linear-nielsen} and Theorem~B as \cref{thm:main}.

Stabilization leads to a different problem.  For a finite tuple
$\mathbf g=(g_1,\ldots,g_r)$, put
\[
 \operatorname{st}(\mathbf g)=(g_1,\ldots,g_r,1).
\]
Kapovich and Weidmann constructed groups with generating tuples that remain
Nielsen inequivalent after stabilization \cite{KWsmall,KWrandom}; our families,
by contrast, collapse after one stabilization.

\begin{mainC}
Let $n\ge 8$, and let the pairs $(a_m,b_m)$ and $(a_m^+,b_m^+)$ be the
families from Theorem~B.  For all $m,\ell\ge1$,
\begin{enumerate}[label=\textup{(\alph*)}]
\item
\[
 (a_m,b_m,1)\sim_N(a_\ell,b_\ell,1)
 \quad\text{as triples in }\Out(F_n);
\]
\item
\[
 (a_m^+,b_m^+,1)\sim_N(a_\ell^+,b_\ell^+,1)
 \quad\text{as triples in }\SOut(F_n).
\]
\end{enumerate}
\end{mainC}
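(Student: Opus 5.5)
We will prove Theorem~C by realizing the change of parameter with Nielsen moves inside the stabilized triple. The key structural feature from the construction is that $a_m=a$ is independent of $m$, and $b_m$ is a product of commuting torsion elements supported on disjoint free factors: $b_m=uvw_m$, where $u$ has order $4$, $v$ has order $3$, and $w_m$ has order $5$ and carries the parameter. By coprimality, $u$, $v$, and $w_m$ are fixed powers of $b_m$; for instance $w_m=b_m^{k}$ with $k\equiv 1 \pmod 5$ and $k\equiv 0\pmod{12}$.

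The first step uses these powers. In the triple $(a,b_m,1)$ we may multiply the third entry by powers of the second, so
\[
 (a,b_m,1)\sim_N(a,b_m,w_m)\sim_N(a,b_m,uv),
\]
after which $(a,b_m,uv)\sim_N(a,w_m,uv)$, since $b_m(uv)^{-1}=w_m$. The triple is thereby decoupled into the fixed element $a$, the fixed element $uv$ (independent of $m$), and the parameter-carrying element $w_m$ of order $5$.

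The second step transforms $w_m$ into $w_\ell$ by right multiplications by words in the other two entries. Since $a$ and $uv$ together with $w_m$ generate the whole group, it suffices to show that $w_\ell w_m^{-1}$, or more flexibly some element $x$ with $w_m x=w_\ell$, lies in the subgroup $\langle a,uv\rangle$. Then $w_m\mapsto w_m x$ is a sequence of elementary moves in which the third and first entries multiply the second, and the second entry does not enter $x$.

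The main obstacle is exhibiting such an $x$, that is, showing that $\langle a,uv\rangle$ is large enough. We would first try to prove that $\langle a,uv\rangle$ already contains the finite-order elements used in the construction, or even all of $\Out(F_n)$ (respectively $\SOut(F_n)$). Conjugating $uv$ by powers of the signed cyclic permutation $a$ moves the free factor supports of $u$ and $v$ around. Commutators of such conjugates with $uv$ should then yield the Gersten Nielsen transvections, in the same way the generation proof recovers them from $b_m$. That proof presumably used the order-$5$ factor only to encode $m$ and to supply some transvections that can alternatively be produced from the order-$4$ and order-$3$ pieces when $n\ge8$. If $\langle a,uv\rangle$ turns out to be all of $\Out(F_n)$, we take $x=w_m^{-1}w_\ell$ and we are done. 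Otherwise we would show directly that $w_\ell$ and $w_m$ differ by an element of $\langle a,uv\rangle$, for example $w_\ell=\phi w_m\phi^{-1}$ combined with a correction term, using explicit formulas for $w_m$ from the construction.

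Finally, we reverse the first step with $\ell$ in place of $m$, obtaining $(a,w_\ell,uv)\sim_N(a,b_\ell,1)$. Part (b) is identical with $a^+$, $b_m^+$, working in $\SOut(F_n)$; the only care needed is that all the elements involved, including the products of moves, remain in $\SOut(F_n)$, which is automatic because Nielsen moves stay within the subgroup generated by the entries.
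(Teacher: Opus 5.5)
There is a genuine gap in your second step. Your reduction to the triple $(a,w_m,uv)$ is correct, since $b_m^{36}=w_m$ and $b_m^{25}=uv$, and so is the final reversal. But the whole argument then rests on showing that $w_m^{-1}w_\ell$ lies in $\langle a,uv\rangle$. Your fallback does not escape this: conjugating an entry by a word in the other entries is also a Nielsen composite, but it needs a conjugator such as $\overline{\mu}^{\,\ell-m}$ to lie in $\langle a,uv\rangle$. You flag this membership as the main obstacle and offer only heuristics (``should then yield'', ``presumably''), so as written the proof is incomplete.

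The missing step can be filled from the paper, because the generation argument never uses the order-$5$ factor. Note that $u=(uv)^9$ and $v=(uv)^4$. \Cref{thm:special-two-generated} gives $\langle\zeta_+,\alpha\beta\rangle=\SAut(F_n)$. The mechanism is not commutators: $\beta$ is written as $\nu\lambda$ with $\lambda\in W_n^+$ and $\nu$ a Gersten transvection, and $\nu$ is then conjugated by $W_n^+$. Combined with \cref{lem:signed-permutations-minus}, the same theorem gives $\langle\zeta_-,\alpha\beta\rangle=\Aut(F_n)$. Hence $\langle a,uv\rangle$ is all of $\Out(F_n)$, respectively $\SOut(F_n)$, and your second step works with $x=w_m^{-1}w_\ell$.

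The decoupling is, however, an unnecessary detour, and the idea you are missing is simpler. In your first step you right-multiply the spare third entry only by powers of $b_m$. You may multiply it by any word in $a$ and $b_m$, and since $\langle a,b_m\rangle=G$ by Theorem~B, you reach $(a,b_m,b_\ell)$ directly. Swapping the last two entries gives $(a,b_\ell,b_m)$. Since $\langle a,b_\ell\rangle=G$ as well, write $b_m^{-1}$ as a word in $a$ and $b_\ell$ and use it to kill the third entry, arriving at $(a,b_\ell,1)$. This is exactly the paper's \cref{prop:common-coordinate-stabilization}. It uses only that the two pairs share a coordinate and both generate, and needs neither the internal structure of $b_m$ nor the stronger fact $\langle a,uv\rangle=G$.
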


Theorem~C is proved in \cref{cor:families-stabilize}.  Any two generating
pairs of any group become Nielsen equivalent after two stabilizations, so the
first stabilization is the only potentially nontrivial one for pairs.  Thus the
particular families constructed here carry no one-stabilization obstruction.

Relation modules provide an algebraic obstruction at this level for other
groups and generating tuples.  If
$M_{\mathbf g}$ is the relation module of a generating tuple $\mathbf g$, then
\[
 M_{\operatorname{st}(\mathbf g)}\cong M_{\mathbf g}\oplus\Z G.
\]
Thus
\[
 M_{\mathbf g}\oplus\Z G\not\cong M_{\mathbf h}\oplus\Z G
\]
obstructs Nielsen equivalence after one stabilization.  Evans's
non-cancellation examples \cite{EvansRelationII} show that this obstruction is
genuine.  Harlander and Jensen's exotic relation module for $BS(2,3)$
\cite{HarlanderJensen} gives a rank-two example in which relation modules
distinguish the original pairs but the distinction disappears after one
stabilization.  Kapovich and Weidmann's two-generated small-cancellation
examples \cite{KWsmall} remain a natural unresolved test case.  We also record
a quotient-level refinement for future applications.

The final section lists open problems on low ranks, weakly redundant Nielsen
connectivity, and relation-module cancellation.

\section{Preliminaries}\label{sec:preliminaries}

\subsection{Free groups and their automorphisms}

\begin{conv}\label{conv:free-group-notation}
Let $n\ge 1$, let
\[
 F_n=F(x_1,\ldots,x_n),
\]
be the free group of rank $n$,  with $X=X_n=\{x_1,\ldots,x_n\}$ as the specified distinguished free basis.  We use the
following notation.
\begin{enumerate}[label=\textup{(\alph*)}]
\item Abelianization, with respect to the ordered basis
$([x_1],\ldots,[x_n])$ of $H_1(F_n;\Z)$, gives a surjective homomorphism
\[
 \rho:\Aut(F_n)\longrightarrow\GL(n,\Z).
\]
Since $\Inn(F_n)\le\ker\rho$, we use the same symbol for the induced
surjective homomorphism
\[
 \rho:\Out(F_n)\longrightarrow\GL(n,\Z).
\]
\item We put
\[
 \SAut(F_n)=\ker(\det\circ\rho:\Aut(F_n)\to\{\pm1\}),
 \qquad
 \SOut(F_n)=\SAut(F_n)/\Inn(F_n).
\]
Thus $\SOut(F_n)$ is naturally the index-two subgroup
$\ker(\det\circ\rho:\Out(F_n)\to\{\pm1\})$ of $\Out(F_n)$, and the
restriction $\rho:\SOut(F_n)\twoheadrightarrow\SL(n,\Z)$ is surjective.
\item For $\varphi\in\Aut(F_n)$, we write
\[
 \overline\varphi=\varphi\Inn(F_n)\in\Out(F_n).
\]
If $\varphi\in\SAut(F_n)$, then $\overline\varphi\in\SOut(F_n)$; we use
the same notation for this element of $\SOut(F_n)$.
\item The \emph{signed basis permutation subgroup} $W_n\le\Aut(F_n)$ consists
of the automorphisms $\chi$ for which there is a unique permutation
$\sigma\in S_n$ and there are unique signs
$\eps_1,\ldots,\eps_n\in\{\pm1\}$ such that
\[
 \chi(x_i)=x_{\sigma(i)}^{\eps_i}\qquad(1\le i\le n).
\]
Thus $W_n\cong(\Z/2\Z)^n\rtimes S_n$.  Put
\[
 W_n^+=W_n\cap\SAut(F_n).
\]
For $\chi\in W_n$, let $\eta(\chi)=\prod_{i=1}^n\eps_i$ denote the
sign-change parity of $\chi$.  Then
\[
 \det\rho(\chi)=\eta(\chi)\operatorname{sgn}(\sigma).
\]
\item Let $E_n\le W_n$ be the subgroup of pure sign changes with even parity;
explicitly, $\chi\in E_n$ if and only if $\sigma=\mathrm{id}$ and
$\eta(\chi)=1$.  The map $\chi\mapsto(\eta(\chi),\sigma)$ induces the identifications
\begin{equation}
 E_n\cong(\Z/2\Z)^{n-1},
 \qquad
 W_n/E_n\cong(\Z/2\Z)\times S_n,
 \qquad
 W_n^+/E_n\cong S_n.
\label{eq:Wquotients-short}
\end{equation}
\end{enumerate}
\end{conv}

\begin{conv}\label{conv:composition}
Throughout the paper, products of automorphisms are compositions with the
rightmost factor applied first:
\[
 (\varphi\psi)(g)=\varphi(\psi(g))\qquad(g\in F_n).
\]
For group commutators we use
\begin{equation}
 [g,h]=g^{-1}h^{-1}gh.
\label{eq:commutator-convention-short}
\end{equation}
For matrices acting on $H_1(F_n;\Z)$, the $j$-th column is the image of the
$j$-th standard basis vector.  For an element $g$ of finite order, $|g|$ denotes the order of $g$.
\end{conv}

\begin{conv}\label{conv:hom-epi}
Let $G$ be a group, let $n\ge 1$ and $F_n=F(X)=F(x_1,\dots, x_n)$.

\begin{enumerate}
\item We denote by $\operatorname{Hom}(F_n,G)$ the set of all homomorphisms
$\theta:F_n\to G$.
\item Using the distinguished basis $X=\{x_1,\dots,x_n\}$ of $F_n$, we identify
$G^n$ with $\operatorname{Hom}(F_n,G)$.  Namely, a tuple
$\mathbf g=(g_1,\dots,g_n)\in G^n$ is identified with the homomorphism
\begin{equation}
\theta_{\mathbf g}:F_n\to G,
\qquad
\theta_{\mathbf g}(x_i)=g_i\quad (i=1,\dots,n).
\label{eq:qtuple-short}
\end{equation}
Conversely, every $\theta:F_n\to G$ equals $\theta_{\mathbf g}$ for
$\mathbf g=(\theta(x_1),\dots,\theta(x_n))\in G^n$.
\item The group $\Aut(F_n)$ acts on the right on
$\operatorname{Hom}(F_n,G)$ by precomposition: for $\Phi\in\Aut(F_n)$ and
$\theta\in\operatorname{Hom}(F_n,G)$, put
$\theta\cdot\Phi:=\theta\circ\Phi$.
\item We denote by $\operatorname{Epi}(F_n,G)$ the (possibly empty) set of all
epimorphisms $\theta:F_n\to G$.  This subset of $\operatorname{Hom}(F_n,G)$ is invariant under the
$\Aut(F_n)$-action just defined.
\end{enumerate}

\end{conv}

\subsection{Nielsen equivalence and weakly redundant tuples}

\begin{defn}[Nielsen maps]\label{def:nielsen-maps}
Let $G$ be a group, let $n\ge 1$, and let $F_n=F(x_1,\dots,x_n)$.
\begin{enumerate}
\item For $1\le i\ne j\le n$ and $\eps\in\{\pm1\}$, define \emph{Nielsen maps}
$G^n\to G^n$ by
\begin{equation}
\begin{aligned}
 I_i(g_1,\ldots,g_n)
  &=(g_1,\ldots,g_{i-1},g_i^{-1},g_{i+1},\ldots,g_n),\\
 P_{ij}(g_1,\ldots,g_i,\ldots,g_j,\ldots,g_n)
  &=(g_1,\ldots,g_j,\ldots,g_i,\ldots,g_n),\\
 R_{ij}^{\eps}(g_1,\ldots,g_n)
  &=(g_1,\ldots,g_{i-1},g_i g_j^{\eps},g_{i+1},\ldots,g_n).
\end{aligned}
\label{eq:elementary-nielsen-maps-short}
\end{equation}
\item Taking $G=F_n$, define endomorphisms
$\iota_i,\pi_{ij},\tau_{ij,\eps}:F_n\to F_n$ by
\[
\begin{aligned}
(\iota_i(x_1),\dots,\iota_i(x_n))&=I_i(x_1,\dots,x_n),\\
(\pi_{ij}(x_1),\dots,\pi_{ij}(x_n))&=P_{ij}(x_1,\dots,x_n),\\
(\tau_{ij,\eps}(x_1),\dots,\tau_{ij,\eps}(x_n))&=R_{ij}^\eps(x_1,\dots,x_n).
\end{aligned}
\]
Let $\mathfrak N_n$ be the set of all these endomorphisms.
\item For $a,b\in\Xpm$ with $a\ne b^{\pm1}$, let $E_{ab}:F_n\to F_n$ be the
endomorphism determined by
\[
 E_{ab}(a)=ab,
 \qquad
 E_{ab}(c)=c\quad
 (c\in\Xpm\setminus\{a,a^{-1}\}).
\]
Let $\mathfrak E_n$ be the set of all such endomorphisms $E_{ab}$.
\end{enumerate}
\end{defn}

We record the following standard facts about these maps.
\begin{prop}\label{prop:gersten-generation}
The following statements hold.
\begin{enumerate}
\item For $n\ge1$, every element of $\mathfrak N_n\cup\mathfrak E_n$ is an
automorphism of $F_n$.  More precisely,
\[
 \iota_i^{-1}=\iota_i,\qquad
 \pi_{ij}^{-1}=\pi_{ij},\qquad
 \tau_{ij,\eps}^{-1}=\tau_{ij,-\eps},
\]
and, if $a=x_i$ and $b=x_j^\eps$, then
\[
 E_{ab}=\tau_{ij,\eps},
\]
whereas if $a=x_i^{-1}$ and $b=x_j^\eps$, then, with the composition convention
of \cref{conv:composition},
\[
 E_{ab}=\iota_i\circ\tau_{ij,\eps}\circ\iota_i.
\]
\item For $n\ge1$,
\[
\Aut(F_n)=\langle \mathfrak N_n\rangle.
\]
\item For $n\ge 3$,
\[
 \SAut(F_n)=\langle \mathfrak E_n\rangle.
\]
\end{enumerate}
\end{prop}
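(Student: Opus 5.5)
Part (1) is a direct computation; parts (2) and (3) are classical theorems of Nielsen and Gersten, which I will cite rather than reprove.

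\emph{Part (1).} Each map in $\mathfrak N_n\cup\mathfrak E_n$ is defined on the free basis $X$, so it extends uniquely to an endomorphism of $F_n$. It therefore suffices to exhibit two-sided inverses.
\begin{itemize}
\item The maps $\iota_i$ and $\pi_{ij}$ are involutions on $X$, so $\iota_i^{-1}=\iota_i$ and $\pi_{ij}^{-1}=\pi_{ij}$.
\item For $\tau_{ij,\eps}$ and $\tau_{ij,-\eps}$ with $i\ne j$, both maps fix $x_j$. Hence $\tau_{ij,-\eps}\tau_{ij,\eps}(x_i)=\tau_{ij,-\eps}(x_ix_j^{\eps})=x_ix_j^{-\eps}x_j^{\eps}=x_i$. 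The symmetric computation gives the other composite, so $\tau_{ij,\eps}^{-1}=\tau_{ij,-\eps}$.
\item If $a=x_i$ and $b=x_j^\eps$, then $E_{ab}$ and $\tau_{ij,\eps}$ agree on $X$ by definition.
\item If $a=x_i^{-1}$ and $b=x_j^\eps$, then $E_{ab}(x_i)=(x_i^{-1}x_j^{\eps})^{-1}=x_j^{-\eps}x_i$. With rightmost-first composition, $\iota_i\tau_{ij,\eps}\iota_i(x_i)=\iota_i\tau_{ij,\eps}(x_i^{-1})=\iota_i\bigl((x_ix_j^{\eps})^{-1}\bigr)=\iota_i(x_j^{-\eps}x_i^{-1})=x_j^{-\eps}x_i$. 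Both maps fix $x_k$ for $k\ne i$, so they coincide.
\end{itemize}
These identities show that every element of $\mathfrak N_n\cup\mathfrak E_n$ is an automorphism.

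\emph{Part (2).} This is Nielsen's theorem that the elementary Nielsen automorphisms generate $\Aut(F_n)$ \cite{Nielsen1924}. The standard argument uses Nielsen reduction. Given $\varphi\in\Aut(F_n)$, the tuple $(\varphi(x_1),\dots,\varphi(x_n))$ is a generating $n$-tuple. Elementary Nielsen moves reduce it to $(x_1,\dots,x_n)$ up to permutation and inversion, which are themselves elementary moves. Each move on tuples is realized by precomposing with an element of $\mathfrak N_n$. Hence $\varphi$ is a product of elements of $\mathfrak N_n$.

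\emph{Part (3).} Each $E_{ab}$ induces an elementary transvection on homology, so it has determinant $1$. This gives the inclusion $\langle\mathfrak E_n\rangle\le\SAut(F_n)$. The reverse inclusion is Gersten's presentation theorem \cite{Gersten}.

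If I wanted to prove the reverse inclusion directly, I would follow four steps.
\begin{enumerate}
\item Write $\varphi\in\SAut(F_n)$ as a product of elements of $\mathfrak N_n$, using part (2).
\item Use the standard identity that, for $n\ge 2$, the product $E_{x_ix_j}E_{x_j^{-1}x_i}E_{x_ix_j^{-1}}$ (suitably ordered) is the signed swap $x_i\mapsto x_j^{\pm1}$, $x_j\mapsto x_i^{\mp1}$. This shows that $\langle\mathfrak E_n\rangle$ contains all transvections and enough of $W_n$ to realize every element of $W_n^+$.
\item Conjugation by an element of $W_n$ permutes the set $\mathfrak E_n$. Hence $\langle\mathfrak E_n\rangle$ is normalized by $W_n$, and I can move all signed-permutation factors of $\varphi$ to one side.
\item This writes $\varphi=\chi\psi$ with $\psi\in\langle\mathfrak E_n\rangle$ and $\chi\in W_n$. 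Since $\det\rho(\varphi)=1$, we get $\chi\in W_n^+$, and $W_n^+\le\langle\mathfrak E_n\rangle$ by step (2).
\end{enumerate}

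The main obstacle is step (2): showing that $W_n^+$ lies in $\langle\mathfrak E_n\rangle$. The even sign changes are the delicate part; a product of two inversions $\iota_i\iota_k$ must be produced from swaps. The hypothesis $n\ge3$ provides the third letter needed for this. Since the statement is recorded as standard, I would cite Gersten \cite{Gersten} for this step rather than carry out the computation.
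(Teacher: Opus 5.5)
Your proposal is correct and follows the paper's own route. Part (1) is done by direct evaluation on the basis, and your computations there are right. Parts (2) and (3) are handled by citing Nielsen and Gersten, as the paper does.

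One small correction to your optional aside on part (3), which your argument does not actually use. The factors $E_{x_ix_j}$ and $E_{x_ix_j^{-1}}$ are mutually inverse, so no ordering of your three factors yields a signed swap. A correct identity is
\[
E_{x_i,x_j}\,E_{x_j,x_i^{-1}}\,E_{x_i^{-1},x_j^{-1}}\colon\quad x_i\mapsto x_j,\quad x_j\mapsto x_i^{-1}.
\]
Its square is already the double inversion of $x_i,x_j$, so producing even sign changes does not need a third letter.
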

\begin{proof}
The inverse formulas in part~\textup{(1)} follow directly from the definitions,
and show that every element of $\mathfrak N_n$ is an automorphism.  The two
formulas for $E_{ab}$ follow by evaluating the displayed compositions on the
distinguished basis; hence the elements of $\mathfrak E_n$ are automorphisms as
well.

Part~\textup{(2)} goes back to the classical work of Nielsen; see
\cite[Ch.~5]{MKS}.  Part~\textup{(3)} is Gersten's generating theorem \cite{Gersten}; see also \cite[Theorem~2.1]{SatohTwistedSecond} for an explicit formulation for $n\ge 3$.
\end{proof}

In view of \cref{prop:gersten-generation}(1), the elements of $\mathfrak N_n$
are the \emph{Nielsen automorphisms} of $F_n$ used throughout the paper.

\begin{defn}\label{def:nielsen-equivalence}
Let $G$ be a group and let $n\ge1$.
\begin{enumerate}[label=\textup{(\alph*)}]
\item The maps $I_i$, $P_{ij}$, and $R_{ij}^{\eps}$ from
\eqref{eq:elementary-nielsen-maps-short} are the \emph{elementary Nielsen
transformations} on $G^n$.
\item Tuples $\mathbf g,\mathbf h\in G^n$ are \emph{Nielsen equivalent},
written $\mathbf g\sim_N\mathbf h$, if $\mathbf h$ is obtained from
$\mathbf g$ by a finite composition of elementary Nielsen transformations.
\item For $s\ge1$, the $s$-fold stabilization of $\mathbf g$ is
\begin{equation}
 \operatorname{st}_s(\mathbf g)
 =(g_1,\ldots,g_n,\underbrace{1,\ldots,1}_{s\text{ entries}}).
\label{eq:stabilization-short}
\end{equation}
We put $\operatorname{st}_0(\mathbf g)=\mathbf g$ and write
$\operatorname{st}(\mathbf g)=\operatorname{st}_1(\mathbf g)$.
\item A generating $n$-tuple $\mathbf g=(g_1,\ldots,g_n)$ for $G$ is
\emph{redundant} if a proper subtuple generates $G$.  A generating $n$-tuple is
\emph{weakly redundant} if it is Nielsen equivalent to a redundant tuple.
\end{enumerate}
\end{defn}

For an $n$-tuple $\mathbf g=(g_1,\dots, g_n)\in G^n$ we denote
\[
\langle \mathbf g\rangle=\langle g_1,\dots, g_n\rangle \le G.
\]

We record the following well-known basic facts regarding Nielsen transformations;
see \cite[Ch.~3]{MKS} for the relevant background.

\begin{prop}\label{prop:basic-nielsen-facts}
Let $G$ be a group and let $n\ge1$.  Then the following statements hold.
\begin{enumerate}[label=\textup{(\alph*)}]
\item The relation $\sim_N$ is an equivalence relation on $G^n$.
\item For $\mathbf g,\mathbf h\in G^n$, if $\mathbf g\sim_N\mathbf h$, then
\[
 \langle\mathbf g\rangle=\langle\mathbf h\rangle.
\]
\item For $\mathbf g,\mathbf h\in G^n$,
\begin{equation}
 \mathbf g\sim_N\mathbf h
 \quad\Longleftrightarrow\quad
 \theta_{\mathbf h}=\theta_{\mathbf g}\circ\Phi
 \text{ for some }\Phi\in\Aut(F_n).
\label{eq:nielsen-aut-short}
\end{equation}
\item If $\mathbf g$ is a generating $n$-tuple of $G$, then the following are
equivalent:
\begin{enumerate}[label=\textup{(\roman*)}]
\item $\mathbf g$ is weakly redundant;
\item $\mathbf g$ is Nielsen equivalent to a tuple having a trivial coordinate;
\item $\ker\theta_{\mathbf g}$ contains a primitive element of $F_n$ (that is,
an element belonging to some free basis of $F_n$).
\end{enumerate}
\end{enumerate}
\end{prop}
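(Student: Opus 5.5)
We prove the four parts in order, keeping \cref{prop:gersten-generation}(2) in hand. The basic observation is that each elementary Nielsen transformation is realised by precomposition with an element of $\mathfrak N_n$. Explicitly, for $\mathbf g\in G^n$ one checks on the basis $x_1,\ldots,x_n$ that
\[
 \theta_{I_i(\mathbf g)}=\theta_{\mathbf g}\circ\iota_i,\qquad
 \theta_{P_{ij}(\mathbf g)}=\theta_{\mathbf g}\circ\pi_{ij},\qquad
 \theta_{R^{\eps}_{ij}(\mathbf g)}=\theta_{\mathbf g}\circ\tau_{ij,\eps}.
\]
These identities hold because $\theta_{\mathbf g}(\tau_{ij,\eps}(x_i))=\theta_{\mathbf g}(x_ix_j^\eps)=g_ig_j^\eps$, and similarly for the other two maps.

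For part (a), reflexivity comes from the empty composition. Symmetry follows because the inverse of each elementary transformation is again elementary: $I_i^{-1}=I_i$, $P_{ij}^{-1}=P_{ij}$ and $(R^\eps_{ij})^{-1}=R^{-\eps}_{ij}$. Transitivity is concatenation of chains.

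For part (b), each elementary move replaces the generating set by elements of the subgroup the old tuple generates, so $\langle\mathbf h\rangle\le\langle\mathbf g\rangle$. Applying symmetry gives the reverse inclusion.

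For part (c), the forward direction comes from composing the identities above along a chain of moves, which yields $\Phi$ as a product of elements of $\mathfrak N_n$. Conversely, by \cref{prop:gersten-generation}(2), any $\Phi\in\Aut(F_n)$ is a product $\nu_1\cdots\nu_k$ of elements of $\mathfrak N_n$. Since $\mathfrak N_n$ is closed under inverses, we may also use these. Then $\theta_{\mathbf g}\circ\nu_1\cdots\nu_k$ is obtained from $\theta_{\mathbf g}$ by applying, in order, the elementary transformations corresponding to $\nu_1,\ldots,\nu_k$; the point to check is that precomposing successively on the right matches this order. Hence $\mathbf g\sim_N\mathbf h$.

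For part (d), we prove (ii)$\Rightarrow$(i)$\Rightarrow$(iii)$\Rightarrow$(ii).
\begin{enumerate}[label=\textup{(\roman*)}]
\item (ii)$\Rightarrow$(i): a tuple with a trivial coordinate and generating $G$ is redundant, since deleting the trivial coordinate leaves a generating subtuple.
\item (i)$\Rightarrow$(iii): suppose $\mathbf g\sim_N\mathbf h$ with $\mathbf h$ redundant, say $g'_1,\ldots,\widehat{h_k},\ldots$ generate $G$, so $h_k$ equals a word $w$ in the other entries. Then $\mathbf h$ is Nielsen equivalent to the tuple with $k$-th coordinate $w^{-1}h_k=1$, using right multiplications $R_{kj}^{\pm}$ and a few inversions. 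This shows (i)$\Rightarrow$(ii). Given a tuple $\mathbf h'\sim_N\mathbf g$ with $h'_k=1$, part (c) gives $\theta_{\mathbf h'}=\theta_{\mathbf g}\circ\Phi$. Then $\Phi(x_k)$ is primitive and lies in $\ker\theta_{\mathbf g}$.
\item (iii)$\Rightarrow$(ii): if $u\in\ker\theta_{\mathbf g}$ is primitive, choose $\Phi\in\Aut(F_n)$ with $\Phi(x_1)=u$. Then $\theta_{\mathbf g}\circ\Phi$ is $\theta_{\mathbf h}$ for a tuple $\mathbf h$ with $h_1=1$, and $\mathbf h\sim_N\mathbf g$ by part (c).
\end{enumerate}

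The only step requiring care is the step in part (i) that turns redundancy into a trivial coordinate. Writing $h_k$ as a product of letters $h_j^{\pm1}$ with $j\ne k$, we first invert $h_k$, then multiply on the right by those letters one at a time, obtaining $h_k^{-1}w=1$. The remaining points are bookkeeping: the ordering convention for composition in part (c), and the fact that $\mathfrak N_n$ is closed under inverses.
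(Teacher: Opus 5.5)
Your proof is correct and follows the paper's argument essentially step for step. That means inverses of elementary moves for (a), subgroup preservation for (b), and for (c) the correspondence of elementary moves with precomposition by $\iota_i,\pi_{ij},\tau_{ij,\eps}$ together with Nielsen generation of $\Aut(F_n)$. For (d) it is the same reduction of a redundant tuple to one with a trivial coordinate, followed by the two applications of (c). The only blemishes are cosmetic: the stray $g'_1$, and writing $w^{-1}h_k$ where your procedure (invert, then right-multiply by the letters of $w$) actually produces $h_k^{-1}w$, which is equally $1$.
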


\begin{proof}
The inverse of $I_i$ is $I_i$, the inverse of $P_{ij}$ is $P_{ij}$, and the
inverse of $R_{ij}^{\eps}$ is $R_{ij}^{-\eps}$.  This proves part~\textup{(a)}.
Each elementary Nielsen transformation preserves the subgroup generated by the
coordinates, which proves part~\textup{(b)}.

Under the identification in \cref{conv:hom-epi}, the transformations
$I_i,P_{ij},R_{ij}^{\eps}$ correspond respectively to precomposition by
$\iota_i,\pi_{ij},\tau_{ij,\eps}$.  These automorphisms generate
$\Aut(F_n)$ by \cref{prop:gersten-generation}(2), proving part~\textup{(c)}.

For part~\textup{(d)}, a generating tuple with a trivial coordinate is
redundant.  Conversely, if a generating tuple is redundant, choose a coordinate
$g_i$ that belongs to the subgroup generated by the remaining coordinates.
Choose a word $W$ in the remaining coordinates such that
$W(g_1,\ldots,\widehat{g_i},\ldots,g_n)=g_i^{-1}$.  Applying to the $i$-th
coordinate the right Nielsen multiplications corresponding to the letters of
$W$ changes that coordinate to $1$ while fixing all other coordinates.  Thus \textup{(i)} and \textup{(ii)} are
equivalent.

If \textup{(ii)} holds, part~\textup{(c)} gives
$\theta_{\mathbf h}=\theta_{\mathbf g}\circ\Phi$ for some $\Phi\in\Aut(F_n)$
and some tuple $\mathbf h$ having, say, the $i$-th coordinate equal to $1$.
Then $\Phi(x_i)\in\ker\theta_{\mathbf g}$ and $\Phi(x_i)$ is primitive, so
\textup{(iii)} holds.  Conversely, if a primitive element
$p\in\ker\theta_{\mathbf g}$ is given, choose $\Phi\in\Aut(F_n)$ with
$\Phi(x_n)=p$.  The tuple corresponding to $\theta_{\mathbf g}\circ\Phi$ has
trivial last coordinate and is Nielsen equivalent to $\mathbf g$ by
part~\textup{(c)}.  Hence \textup{(iii)} implies \textup{(ii)}.
\end{proof}

\begin{defn}[Nielsen graphs]\label{def:nielsen-graph}
For $n\ge 1$, the \emph{Nielsen graph} $N_n(G)$ has the ordered generating
$n$-tuples of elements of $G$ as vertices and joins two vertices when one
elementary Nielsen transformation takes one tuple to the other.  The weakly
redundant vertices form a union of connected components; we denote the
resulting subgraph by $N_n^{\rm wr}(G)$.
\end{defn}
For background on Nielsen graphs and $\Aut(F_n)$-actions on generating tuples,
see Myropolska--Nagnibeda \cite{MyropolskaNagnibeda}.

\begin{prop}\label{prop:common-coordinate-stabilization}
Let $G$ be a group, let $n\ge1$, let $c_1,\ldots,c_{n-1},u,v\in G$, and
assume
\begin{equation}
 G=\langle c_1,\ldots,c_{n-1},u\rangle
 =\langle c_1,\ldots,c_{n-1},v\rangle.
\label{eq:common-generation-short}
\end{equation}
Then
\begin{equation}
 (c_1,\ldots,c_{n-1},u,1)
 \sim_N
 (c_1,\ldots,c_{n-1},v,1).
\label{eq:common-stabilization-short}
\end{equation}
\end{prop}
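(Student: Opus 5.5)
The idea is to use the trivial last coordinate as a scratch register: first build $v$ there, then remove $u$ from the $n$-th slot, then swap. Throughout we use only right Nielsen multiplications $R_{ij}^{\eps}$ together with one transposition $P_{n,n+1}$.

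\emph{Step 1: build $v$ in the last slot.} Since $v\in G=\langle c_1,\ldots,c_{n-1},u\rangle$ by \eqref{eq:common-generation-short}, choose a word $W$ in the letters $c_1,\ldots,c_{n-1},u$ and their inverses with
\[
 W(c_1,\ldots,c_{n-1},u)=v.
\]
Starting from $(c_1,\ldots,c_{n-1},u,1)$, apply right multiplications $R_{n+1,j}^{\pm1}$ letter by letter, reading $W$ from left to right. This replaces the last coordinate $1$ by $v$ and fixes all other coordinates, giving $(c_1,\ldots,c_{n-1},u,v)$. This is the same device used in the proof of \cref{prop:basic-nielsen-facts}(d).

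\emph{Step 2: kill $u$.} By the second generation statement in \eqref{eq:common-generation-short}, $u^{-1}\in\langle c_1,\ldots,c_{n-1},v\rangle$. Choose a word $V$ in $c_1,\ldots,c_{n-1},v$ with value $u^{-1}$. Apply right multiplications $R_{n,j}^{\pm1}$ with $j\in\{1,\ldots,n-1,n+1\}$, following the letters of $V$. Since none of these coordinates is the $n$-th one, they stay fixed, and the $n$-th coordinate becomes $u\cdot u^{-1}=1$. This yields $(c_1,\ldots,c_{n-1},1,v)$.

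\emph{Step 3: swap.} Applying $P_{n,n+1}$ gives $(c_1,\ldots,c_{n-1},v,1)$, as required. Transitivity from \cref{prop:basic-nielsen-facts}(a) then gives \eqref{eq:common-stabilization-short}.

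There is no real obstacle here. The only point needing care is that each multiplication step uses an entry in a different position, as the definition of $R_{ij}^{\eps}$ requires. In Step 1 the target slot is $n+1$ and the multipliers lie in slots $1,\ldots,n$. In Step 2 the target slot is $n$ and the multipliers lie in the other slots, so the definition is satisfied. Note also that the hypothesis on $u$ is used in Step 1 and the hypothesis on $v$ in Step 2, so both generation assumptions are genuinely needed.
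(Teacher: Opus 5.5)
Your proposal is correct and is essentially the paper's proof: both build $v$ in the trivial slot by right multiplications, use the second generation hypothesis to cancel $u$, and apply one transposition. The only difference is that you cancel $u$ in slot $n$ before applying $P_{n,n+1}$, whereas the paper swaps first and then cancels $u$ in the last slot, which is an immaterial reordering.
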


\begin{proof}
Choose
\[
 W=x_{i_1}^{\eps_1}\cdots x_{i_s}^{\eps_s}\in F_n
\]
with
\[
 W(c_1,\ldots,c_{n-1},u)=v.
\]
Starting from $(c_1,\ldots,c_{n-1},u,1)$, apply
\[
 R_{n+1,i_1}^{\eps_1},\ldots,R_{n+1,i_s}^{\eps_s}.
\]
The first $n$ coordinates remain fixed and the last coordinate becomes $v$.
Hence
\[
 (c_1,\ldots,c_{n-1},u,1)
 \sim_N
 (c_1,\ldots,c_{n-1},u,v).
\]
Apply $P_{n,n+1}$.  By \eqref{eq:common-generation-short}, choose
$W'\in F_n$ with
\[
 W'(c_1,\ldots,c_{n-1},v)=u^{-1}.
\]
Applying to the last coordinate the transformations corresponding to the
letters of $W'$ changes that coordinate from $u$ to $1$ and fixes the first
$n$ coordinates.  The resulting tuple is
$(c_1,\ldots,c_{n-1},v,1)$.
\end{proof}

The following classical proposition is the commutator, or Higman, invariant
for generating pairs; see, for example, \cite[Lemma~2.4]{Monden}.

\begin{prop}\label{prop:commutator-invariant}
Let $(g,h)$ and $(g',h')$ be Nielsen-equivalent ordered pairs in a group $G$.
Then $[g',h']$ is conjugate in $G$ to either $[g,h]$ or $[g,h]^{-1}$.
\end{prop}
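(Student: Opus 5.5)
By \cref{prop:basic-nielsen-facts}(a), $\sim_N$ is generated by the elementary moves and these moves are invertible. So it suffices to treat the case where $(g',h')$ is obtained from $(g,h)$ by a single elementary Nielsen transformation $I_1,I_2,P_{12},R_{12}^{\pm1},R_{21}^{\pm1}$. The argument then closes by induction on the length of the chain. The relation ``$c'$ is conjugate to $c$ or $c^{-1}$'' is transitive: if $c'\sim c^{\pm1}$ and $c''\sim c'^{\pm1}$, then $c''\sim c^{\pm1}$, because conjugation commutes with inversion.

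For each elementary move I compute the new commutator with the convention $[g,h]=g^{-1}h^{-1}gh$ of \eqref{eq:commutator-convention-short}.
\begin{itemize}
\item Swap $P_{12}$: $[h,g]=[g,h]^{-1}$.
\item Inversion $I_1$: $[g^{-1},h]=ghg^{-1}h^{-1}=g\,(hg^{-1}h^{-1}g)\,g^{-1}=g\,[g,h]^{-1}g^{-1}$ after checking $hg^{-1}h^{-1}g=[h,g^{-1}]^{\ldots}$. The cleanest route is the identity $[g^{-1},h]=g[h,g]g^{-1}$, which one verifies directly: $g\,h^{-1}g^{-1}hg\,g^{-1}=gh^{-1}g^{-1}h$. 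Since I should double-check which of these matches, I will instead verify by expanding both sides carefully. The case $I_2$ follows from $I_1$ by the swap.
\item Right multiplication $R_{12}^{\eps}$: $[gh^{\eps},h]=h^{-\eps}g^{-1}h^{-1}gh^{\eps}h=h^{-\eps}[g,h]h^{\eps}$, a conjugate of $[g,h]$. The moves $R_{21}^{\eps}$ reduce to this via $P_{12}$.
\end{itemize}

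Alternatively, and more conceptually, I can use \cref{prop:basic-nielsen-facts}(c). We have $[g',h']=\theta_{\mathbf g}(\Phi([x_1,x_2]))$ for some $\Phi\in\Aut(F_2)$. The classical Nielsen fact quoted in the introduction says every automorphism of $F_2$ sends $[x_1,x_2]$ to a conjugate of $[x_1,x_2]^{\pm1}$. Applying the homomorphism $\theta_{\mathbf g}$ preserves conjugacy and inversion, which gives the claim immediately.

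I would present this second route as the main proof and use the generator computations above to justify the Nielsen fact, since it only needs to be checked on the generators $\mathfrak N_2$ by \cref{prop:gersten-generation}(2). The only delicate point is the bookkeeping in the inversion case, to get the exact conjugating element. Even there, only the conclusion ``conjugate to the commutator or its inverse'' is needed, so any correct identity suffices.
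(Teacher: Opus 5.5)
Your proof is correct and is essentially the paper's: both reduce to checking the elementary Nielsen moves, and your identities $[h,g]=[g,h]^{-1}$, $[g^{-1},h]=g[h,g]g^{-1}=g[g,h]^{-1}g^{-1}$ and $[gh^{\eps},h]=h^{-\eps}[g,h]h^{\eps}$ are exactly the ones the paper uses. Your route through $\Aut(F_2)$ and $\theta_{\mathbf g}$ only performs the same check in the universal case $G=F_2$, $(g,h)=(x_1,x_2)$. In the write-up, delete the first scratch line of the inversion case, because it is wrong: $[g^{-1},h]=gh^{-1}g^{-1}h$, not $ghg^{-1}h^{-1}$ (which is $[g^{-1},h^{-1}]$), and $hg^{-1}h^{-1}g\ne[g,h]^{-1}$ in general.
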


\begin{proof}
The assertion follows by checking the elementary Nielsen transformations.
Interchanging the coordinates inverts the commutator.  Inverting the first
coordinate gives
\[
 [g^{-1},h]=g[g,h]^{-1}g^{-1}.
\]
The other inversion follows by symmetry.  Finally,
\[
 [gh,h]=h^{-1}[g,h]h,
 \qquad
 [gh^{-1},h]=h[g,h]h^{-1},
\]
and the transformations changing the second coordinate follow by symmetry.
\end{proof}

\section{Two-generation of the special automorphism groups}\label{sec:two-generation}

We record a subsidiary two-generation result.  The argument for $n\ge4$ is a
short consequence of Gersten's generating set from
\cref{prop:gersten-generation}, whereas rank $3$ requires a separate explicit
construction.

\begin{conv}\label{conv:two-generation-automorphisms}
Let $n\ge4$.  Define $\alpha,\beta\in\Aut(F_n)$ as follows.  The
automorphism $\alpha$ fixes every $x_i$ with $i\notin\{1,2\}$ and satisfies
\begin{equation}
 \alpha(x_1)=x_2,
 \qquad
 \alpha(x_2)=x_1^{-1}.
\label{eq:alpha-short}
\end{equation}
The automorphism $\beta$ fixes every $x_i$ with $i\notin\{3,4\}$ and
satisfies
\begin{equation}
 \beta(x_3)=x_4^{-1}x_3^{-1},
 \qquad
 \beta(x_4)=x_3.
\label{eq:beta-short}
\end{equation}
Also define the signed cyclic basis permutation $\zeta_+\in W_n$ by
\begin{equation}
 \zeta_+(x_i)=x_{i+1}\quad(1\le i<n),
 \qquad
 \zeta_+(x_n)=x_1^{(-1)^{n-1}}.
\label{eq:zeta-plus-short}
\end{equation}
\end{conv}

\begin{lem}\label{lem:transitivity-Wplus}
Let $n\ge 3$.  The group $W_n^+$ acts transitively on
\begin{equation}
 \mathcal A_n
 =\{(a,b)\in\Xpm\times\Xpm\mid a\ne b^{\pm1}\}.
\label{eq:admissible-pairs-short}
\end{equation}
\end{lem}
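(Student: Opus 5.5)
The plan is to show that every admissible pair can be moved to the fixed base pair $(x_1,x_2)$ by an element of $W_n^+$. First I need to know how $W_n$ acts on $\Xpm$. Every $\chi\in W_n$ permutes $\Xpm$ and commutes with inversion, so $\chi(a^{-1})=\chi(a)^{-1}$. In particular $\chi$ maps $\mathcal A_n$ into itself: if $a\ne b^{\pm1}$, then $\chi(a)\ne\chi(b)^{\pm1}$. Hence $W_n^+$ acts on $\mathcal A_n$, and it remains to prove transitivity.

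Fix $(a,b)\in\mathcal A_n$ and write $a=x_i^{\delta}$, $b=x_j^{\gamma}$. Admissibility forces $i\ne j$. Choose a permutation $\sigma\in S_n$ with $\sigma(i)=1$ and $\sigma(j)=2$. Define $\chi\in W_n$ by $\chi(x_i)=x_1^{\delta}$, $\chi(x_j)=x_2^{\gamma}$, and $\chi(x_k)=x_{\sigma(k)}^{\eps_k}$ for $k\ne i,j$. The signs $\eps_k$ are still free at this point. Since $\chi(x_i^{-1})=x_1^{-\delta}$, the requirements $\chi(x_1^\delta)$ and so on pull back correctly: we have $\chi(a)=x_1^{\delta^2}=x_1$ and $\chi(b)=x_2$.

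To land in $W_n^+$, use the formula $\det\rho(\chi)=\eta(\chi)\operatorname{sgn}(\sigma)$ from \cref{conv:free-group-notation}(d). Because $n\ge3$, there is an index $k\notin\{i,j\}$. Flipping the sign $\eps_k$ changes $\eta(\chi)$ without affecting $\chi(a)$ or $\chi(b)$. So we can choose the $\eps_k$ so that $\det\rho(\chi)=1$, that is, $\chi\in W_n^+$.

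Thus every pair lies in the $W_n^+$-orbit of $(x_1,x_2)$, and the action is transitive. There is no real obstacle here. The only points needing care are:
\begin{itemize}
\item checking that the orientation of $x_i$ is handled correctly when $\delta=-1$, which is done by setting $\chi(x_i)=x_1^{\delta}$;
\item using $n\ge3$ to find a free coordinate whose sign adjusts the determinant.
\end{itemize}
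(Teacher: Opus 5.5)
Your proof is correct and is the same argument as the paper's: realize the move by a signed basis permutation, and if its determinant is $-1$, flip the sign of a basis element not involved in the pair, which exists because $n\ge3$. The only differences are cosmetic: you route everything through the base pair $(x_1,x_2)$, and you adjust a sign on the source side rather than the target side.
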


\begin{proof}
A signed basis permutation can send any pair in $\mathcal A_n$ to any other.
If the determinant of such a signed basis permutation is $-1$, invert one
basis element not used by the target pair.  Since $n\ge 3$, such a basis element
exists.  The additional inversion fixes the target pair and changes the
determinant.
\end{proof}

\begin{lem}\label{lem:signed-permutations-plus}
Let $n\ge4$, and let $\alpha,\zeta_+$ be as in
\cref{conv:two-generation-automorphisms}.  Then
\begin{equation}
 \langle \zeta_+,\alpha\rangle=W_n^+.
\label{eq:signed-generation-plus-short}
\end{equation}
\end{lem}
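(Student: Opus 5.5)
We first check that both generators lie in $W_n^+$, and then show they generate all of it. The element $\alpha$ acts by $x_1\mapsto x_2$, $x_2\mapsto x_1^{-1}$, so its permutation is the transposition $(1\,2)$ with one sign change. Hence $\det\rho(\alpha)=(-1)(-1)=1$. The element $\zeta_+$ has permutation the $n$-cycle, of sign $(-1)^{n-1}$, and sign-change parity $(-1)^{n-1}$, so its determinant is $1$. Thus $\langle\zeta_+,\alpha\rangle\le W_n^+$.

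For the reverse inclusion, we use the exact sequence $1\to E_n\to W_n^+\to S_n\to1$ from \eqref{eq:Wquotients-short}. It suffices to show two things:
\begin{enumerate}[label=\textup{(\roman*)}]
\item $H:=\langle\zeta_+,\alpha\rangle$ maps onto $S_n$;
\item $H\supseteq E_n$.
\end{enumerate}

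For (i), the images of $\zeta_+$ and $\alpha$ are the $n$-cycle $(1\,2\,\cdots\,n)$ and the transposition $(1\,2)$. These generate $S_n$.

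For (ii), we produce an even sign change inside $H$. We have $\alpha^2=\iota_1\iota_2$, the sign change on $x_1$ and $x_2$, and it lies in $E_n$. Conjugating by powers of $\zeta_+$ moves this to the sign change on $x_i,x_{i+1}$ for $1\le i<n$. Indeed, $\zeta_+^k\,\iota_1\iota_2\,\zeta_+^{-k}$ inverts $\zeta_+^k(x_1)$ and $\zeta_+^k(x_2)$, and for $k\le n-2$ these are $x_{k+1}$ and $x_{k+2}$ with no sign issues. Signs from wrapping around are irrelevant in any case, because conjugating a pure sign change only permutes which coordinates are inverted. The products $\iota_i\iota_{i+1}$ for $1\le i<n$ generate the even-parity subgroup $E_n\cong(\Z/2\Z)^{n-1}$. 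So $E_n\le H$.

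Combining (i) and (ii): every element of $W_n^+$ agrees modulo $E_n$ with an element of $H$, and $E_n\le H$, so $H=W_n^+$.

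The only point needing care is the conjugation formula for sign changes by a signed permutation. For $\chi(x_i)=x_{\sigma(i)}^{\eps_i}$, the conjugate $\chi\iota_j\chi^{-1}$ equals $\iota_{\sigma(j)}$. This is immediate from evaluating both sides on the basis. The hypothesis $n\ge4$ is not really needed here; the argument works for $n\ge2$.
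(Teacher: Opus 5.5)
Your proof is correct and follows essentially the same route as the paper. Both arguments check that $\alpha,\zeta_+\in W_n^+$ via the determinant formula, obtain $E_n$ from the $\zeta_+$-conjugates of $\alpha^2$, and conclude because the images of $\zeta_+$ and $\alpha$ (an $n$-cycle and an adjacent transposition) generate $W_n^+/E_n\cong S_n$; your explicit formula $\chi\iota_j\chi^{-1}=\iota_{\sigma(j)}$ simply makes precise a conjugation step the paper leaves implicit.
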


\begin{proof}
The underlying permutation of $\alpha$ is the transposition $(1\ 2)$ and the
sign-change parity of $\alpha$ is $-1$, so $\det\rho(\alpha)=1$.  The
underlying permutation of $\zeta_+$ is an $n$-cycle and the sign-change parity
of $\zeta_+$ is $(-1)^{n-1}$, so $\det\rho(\zeta_+)=1$.  Thus
$\alpha,\zeta_+\in W_n^+$.

The element $\alpha^2$ inverts $x_1,x_2$.  Conjugating $\alpha^2$ by powers of
$\zeta_+$ gives the adjacent double sign changes, which generate $E_n$.
Modulo $E_n$, the elements $\zeta_+$ and $\alpha$ induce an $n$-cycle and a
transposition in $W_n^+/E_n\cong S_n$.  Hence their images generate the
quotient, and therefore $\langle\zeta_+,\alpha\rangle=W_n^+$.
\end{proof}

\begin{thm}\label{thm:special-two-generated}
For every $n\ge 3$,
\[
 d(\SAut(F_n))=d(\SOut(F_n))=2.
\]
More precisely, for $n\ge 4$ the pair
\[
 (\zeta_+,\alpha\beta)
\]
from \cref{conv:two-generation-automorphisms} generates $\SAut(F_n)$.  For
$n=3$, the pair $(g,h)$ defined in \eqref{eq:rank-three-g-h} generates
$\SAut(F_3)$.
\end{thm}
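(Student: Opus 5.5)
For $n\ge4$, put $H=\langle\zeta_+,\alpha\beta\rangle\le\Aut(F_n)$. The plan is to show $H=\SAut(F_n)$ by getting all of Gersten's generators $\mathfrak E_n$ into $H$ and then invoking \cref{prop:gersten-generation}(3).

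First, $H\le\SAut(F_n)$. \Cref{lem:signed-permutations-plus} gives $\det\rho(\zeta_+)=\det\rho(\alpha)=1$. On homology, $\beta$ restricted to $\langle[x_3],[x_4]\rangle$ has matrix $\begin{pmatrix}-1&1\\-1&0\end{pmatrix}$, which has determinant $1$.

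Next, separate $\alpha$ and $\beta$ inside $H$.
\begin{itemize}
\item $\alpha$ and $\beta$ have disjoint supports, so they commute.
\item $\alpha$ has order $4$.
\item $\beta$ has order $3$: one checks directly that $\beta^3$ fixes $x_3$ and $x_4$.
\end{itemize}
Hence $\alpha\beta$ has order $12$, and
\[
 (\alpha\beta)^9=\alpha^9\beta^9=\alpha,\qquad (\alpha\beta)^4=\alpha^4\beta^4=\beta .
\]
So $\alpha,\beta\in H$. By \cref{lem:signed-permutations-plus}, $W_n^+=\langle\zeta_+,\alpha\rangle\le H$.

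Now produce one elementary automorphism $E_{ab}$ in $H$. Let $\chi\in W_n^+$ invert $x_3$ and $x_1$ and fix the other basis elements. Then
\[
 \beta\chi:\quad x_3\mapsto x_3x_4,\quad x_4\mapsto x_3,\quad x_1\mapsto x_1^{-1}.
\]
Post-compose with a suitable element of $W_n^+$: the transposition of $x_3,x_4$, together with an inversion of $x_1$ to restore the identity on $x_1$ and keep determinant $1$. This yields an automorphism fixing every $x_i$ with $i\ne3$ and sending $x_3\mapsto x_4x_3$. That automorphism is $E_{ab}$ with $a=x_3^{-1}$, $b=x_4^{-1}$, since $x_3^{-1}\mapsto x_3^{-1}x_4^{-1}$. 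If the bookkeeping of signs comes out slightly differently, the result is still $E_{ab}$ times an element of $W_n^+$, so in either case some $E_{ab}\in H$.

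For $\chi\in W_n$ we have $\chi E_{ab}\chi^{-1}=E_{\chi(a)\chi(b)}$. By \cref{lem:transitivity-Wplus}, conjugation by $W_n^+\le H$ then puts every element of $\mathfrak E_n$ in $H$. By \cref{prop:gersten-generation}(3), $H=\SAut(F_n)$.

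For the equality of ranks, $\SAut(F_n)$ surjects onto $\SOut(F_n)$, which in turn surjects onto $\SL(n,\Z)$. The group $\SL(n,\Z)$ is noncyclic (indeed nonabelian), so neither $\SAut(F_n)$ nor $\SOut(F_n)$ is cyclic. Together with the explicit generating pair, this gives
\[
 d(\SAut(F_n))=d(\SOut(F_n))=2 .
\]

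For $n=3$ the argument must use the explicit pair $(g,h)$ of \eqref{eq:rank-three-g-h}. Here $\alpha$ and $\beta$ cannot be placed on disjoint free factors, and \cref{lem:signed-permutations-plus} needs $n\ge4$. The strategy is the same: extract suitable powers or short words in $g,h$ that generate $W_3^+$ (a group with quotient $S_3$ by $E_3\cong(\Z/2\Z)^2$), extract a single $E_{ab}$ modulo $W_3^+$, and then use \cref{lem:transitivity-Wplus}, which holds for $n\ge3$, together with Gersten's theorem. I expect this rank-three case to be the main obstacle. It requires a concrete word computation tailored to the specific $(g,h)$, rather than the coprime-orders trick that works so cleanly for $n\ge4$.
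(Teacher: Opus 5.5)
Your argument for $n\ge4$ is correct and is essentially the paper's. You recover $\alpha=(\alpha\beta)^9$ and $\beta=(\alpha\beta)^4$, and you get $W_n^+$ from \cref{lem:signed-permutations-plus}. You then strip signed permutations off $\beta$ to reach one Gersten generator: your $\psi\beta\chi$ is $E_{x_3^{-1},x_4^{-1}}$, where the paper uses $\beta\lambda^{-1}=E_{x_4^{-1},x_3^{-1}}$. You finish with transitivity and Gersten's theorem. The sign bookkeeping works exactly as you wrote it, so the hedge about it is unnecessary.

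The genuine gap is $n=3$. The statement explicitly covers this case: it asserts both that the specific pair $(g,h)$ generates $\SAut(F_3)$ and that $d(\SAut(F_3))=d(\SOut(F_3))=2$. You give only a strategy, not a proof. Powers alone will not supply signed permutations here: one computes that $g^4$ is the inner automorphism $x\mapsto x_2^{-1}xx_2$, so $g$ has infinite order and there is no coprime-order extraction.

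The missing idea uses how $g,h$ are built. We have $g=\sigma E$ and $h=\tau E$, with $E=E_{x_1,x_2}$ and $\sigma,\tau\in W_3^+$ as in \eqref{eq:rank-three-sigma-tau}. So getting $W_3^+$ into $\langle g,h\rangle$ is equivalent to getting $E$, and this requires one explicit relation. The paper supplies it: $E=ghgh^{-1}g^{-1}h^{-1}gh^{-4}g^{-1}$, checked by evaluating on $x_1,x_2,x_3$.

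It then follows that $\sigma=gE^{-1}$ and $\tau=hE^{-1}$ lie in $\langle g,h\rangle$, and $\langle\sigma,\tau\rangle=W_3^+$. Indeed, $\sigma^2$ inverts $x_1,x_3$, and its conjugates by $\tau,\tau^2$ give $E_3$. Modulo $E_3$, $\sigma$ and $\tau$ induce a transposition and a $3$-cycle. From there your $n\ge4$ endgame applies verbatim, since \cref{lem:transitivity-Wplus} holds for $n\ge3$. Without such a word, the rank-three part of the theorem remains unproved.
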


\begin{proof}
Assume first that $n\ge4$ and put
\[
 \Omega=\alpha\beta.
\]
The defining formulas show that $\alpha,\beta,\zeta_+\in\SAut(F_n)$.  The
automorphisms $\alpha$ and $\beta$ have respective orders $4$ and $3$ and
have disjoint supports.  Hence they commute and
\begin{equation}
 \Omega^9=\alpha,
 \qquad
 \Omega^4=\beta.
\label{eq:recover-alpha-beta-12}
\end{equation}
By \cref{lem:signed-permutations-plus}, the group
$L=\langle\zeta_+,\Omega\rangle$ therefore contains $W_n^+$.

Let $\lambda,\nu\in\Aut(F_n)$ fix every basis element $x_i$ with
$i\notin\{3,4\}$ and satisfy
\[
 \lambda(x_3)=x_4^{-1},\qquad \lambda(x_4)=x_3,
 \qquad
 \nu(x_3)=x_3,\qquad \nu(x_4)=x_3x_4.
\]
Then $\lambda\in W_n^+$ and, by \eqref{eq:beta-short},
$\beta=\nu\lambda$.  Thus
\[
 \nu=\beta\lambda^{-1}\in L.
\]
Moreover,
\[
 \nu=E_{x_4^{-1},x_3^{-1}}.
\]
For $\chi\in W_n^+$ one has
\[
 \chi E_{ab}\chi^{-1}=E_{\chi(a),\chi(b)}.
\]
By \cref{lem:transitivity-Wplus}, the $W_n^+$-conjugates of $\nu$ contain
every element of $\mathfrak E_n$.  Gersten's generating theorem,
\cref{prop:gersten-generation}(3), now gives
\[
 \SAut(F_n)=\langle\zeta_+,\Omega\rangle
 \qquad(n\ge4).
\]

It remains to treat $n=3$.  Put
\[
 E=E_{x_1,x_2}\in\SAut(F_3),
\]
and define signed basis permutations $\sigma,\tau\in W_3$ by
\begin{equation}
\begin{aligned}
 \sigma(x_1)&=x_3^{-1},&
 \sigma(x_2)&=x_2,&
 \sigma(x_3)&=x_1,\\
 \tau(x_1)&=x_3^{-1},&
 \tau(x_2)&=x_1,&
 \tau(x_3)&=x_2^{-1}.
\end{aligned}
\label{eq:rank-three-sigma-tau}
\end{equation}
The sign-change parity and the permutation parity have the same sign for each
of $\sigma$ and $\tau$, so $\sigma,\tau\in W_3^+$.  The element
$\sigma^2$ inverts $x_1,x_3$.  Conjugating $\sigma^2$ by
$\tau$ and $\tau^2$ gives the other double sign changes, so
$E_3\le\langle\sigma,\tau\rangle$.  Modulo $E_3$, the automorphisms
$\sigma$ and $\tau$ induce, respectively, a transposition and a $3$-cycle in
$W_3^+/E_3\cong S_3$.  Consequently,
\begin{equation}
 \langle\sigma,\tau\rangle=W_3^+.
\label{eq:rank-three-Wplus}
\end{equation}

Now put
\begin{equation}
 g=\sigma E,
 \qquad
 h=\tau E.
\label{eq:rank-three-g-h}
\end{equation}
Since $\sigma,\tau,E\in\SAut(F_3)$, we have $g,h\in\SAut(F_3)$.  The
defining formulas give
\[
\begin{aligned}
 g(x_1)&=x_3^{-1}x_2,& g(x_2)&=x_2,& g(x_3)&=x_1,\\
 h(x_1)&=x_3^{-1}x_1,& h(x_2)&=x_1,& h(x_3)&=x_2^{-1}.
\end{aligned}
\]
Let $R$ denote the automorphism represented by the word
\[
 R=g h g h^{-1}g^{-1}h^{-1}g h^{-4}g^{-1}.
\]
For example, applying the four rightmost factors first and freely reducing
gives
\[
\begin{aligned}
 (h^{-1}gh^{-4}g^{-1})(x_1)&=x_3^{-1}x_2^2x_1^{-1},\\
 (h^{-1}gh^{-4}g^{-1})(x_2)&=x_1^{-1},\\
 (h^{-1}gh^{-4}g^{-1})(x_3)&=x_1^{-1}x_2^{-1}x_3.
\end{aligned}
\]
Continuing with the remaining factors gives
\[
 R(x_1)=x_1x_2,\qquad R(x_2)=x_2,\qquad R(x_3)=x_3.
\]
Consequently,
\begin{equation}
 E= g h g h^{-1}g^{-1}h^{-1}g h^{-4}g^{-1}.
\label{eq:rank-three-recover-E}
\end{equation}
Hence
$E\in\langle g,h\rangle$, and \eqref{eq:rank-three-g-h} gives
\[
 \sigma=gE^{-1},
 \qquad
 \tau=hE^{-1}.
\]
By \eqref{eq:rank-three-Wplus}, the subgroup $\langle g,h\rangle$ therefore
contains $W_3^+$.  Using \cref{lem:transitivity-Wplus} once more, the
$W_3^+$-conjugates of $E=E_{x_1,x_2}$ contain every element of
$\mathfrak E_3$.  Thus \cref{prop:gersten-generation}(3) yields
\[
 \SAut(F_3)=\langle g,h\rangle.
\]

For every $n\ge 3$, passage to the quotient by $\Inn(F_n)$ shows that
$\SOut(F_n)$ is also generated by two elements.  Finally, both
$\SAut(F_n)$ and $\SOut(F_n)$ map onto the noncyclic group $\SL(n,\Z)$ under
$\rho$, so neither group is cyclic.  Hence both groups have minimal number of
generators equal to $2$.
\end{proof}

\section{Explicit torsion generating pairs}\label{sec:generation}

Assume throughout this section that $n\ge 8$.

We retain the automorphisms $\alpha,\beta,\zeta_+$ from
\cref{conv:two-generation-automorphisms}.

\begin{conv}\label{conv:torsion-automorphisms}
The following additional notation will be used.
\begin{enumerate}[label=\textup{(\alph*)}]
\item Let $\gamma\in\Aut(F(x_5,x_6,x_7,x_8))$ be given by
\begin{equation}
 \gamma(x_5)=x_6,
 \quad \gamma(x_6)=x_7,
 \quad \gamma(x_7)=x_8,
 \quad \gamma(x_8)=(x_5x_6x_7x_8)^{-1}.
\label{eq:gamma-short}
\end{equation}
This assignment defines an automorphism: the inverse of $\gamma$ sends
$x_5$ to $(x_5x_6x_7x_8)^{-1}$ and sends $x_6,x_7,x_8$ to
$x_5,x_6,x_7$, respectively.  Extend $\gamma$ to $F_n$, again denoted by
$\gamma\in\Aut(F_n)$, by setting $\gamma(x_i)=x_i$ for
$i\notin\{5,6,7,8\}$.
\item Let $\mu\in\Aut(F(x_5,x_6))$ be given by
\begin{equation}
 \mu(x_5)=x_5,
 \qquad
 \mu(x_6)=x_5x_6.
\label{eq:mu-short}
\end{equation}
Extend $\mu$ to $F_n$, again denoted by $\mu\in\Aut(F_n)$, by setting
$\mu(x_i)=x_i$ for $i\notin\{5,6\}$.  For $m\ge1$,
put, in $\Aut(F_n)$,
\begin{equation}
 \gamma_m=\mu^m\gamma\mu^{-m},
 \qquad
 \omega_m=\alpha\beta\gamma_m.
\label{eq:gamma-omega-short}
\end{equation}
\end{enumerate}
\end{conv}

\begin{lem}\label{lem:torsion-packing}
Let $n\ge 8$ and $m\ge1$.  In the group $\Aut(F_n)$, the following statements
hold.
\begin{enumerate}[label=\textup{(\alph*)}]
\item The elements $\alpha,\beta,\gamma_m$ have respective orders $4,3,5$ and commute
pairwise.
\item The element $\omega_m$ belongs to $\SAut(F_n)$ and has order $60$ as an
element of $\Aut(F_n)$.
\item The identities
\begin{equation}
 \omega_m^{45}=\alpha,
 \qquad
 \omega_m^{40}=\beta,
 \qquad
 \omega_m^{36}=\gamma_m
\label{eq:recover-factors-short}
\end{equation}
hold in $\Aut(F_n)$.
\end{enumerate}
\end{lem}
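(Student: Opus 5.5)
The plan is to reduce everything to the orders of the three factors and the fact that they commute, and then apply elementary arithmetic modulo $60$.

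For part (a), I first compute the orders directly on the basis. $\alpha^2$ inverts $x_1,x_2$, so $\alpha$ has order $4$. For $\beta$, iterate: $\beta^2(x_3)=\beta(x_4^{-1}x_3^{-1})=x_3^{-1}x_4x_3\cdot$ wait—more carefully, $\beta^2(x_3)=x_3^{-1}\,x_3x_4=x_4$ after free reduction, and $\beta^2(x_4)=x_4^{-1}x_3^{-1}$. Then $\beta^3$ fixes $x_3$ and $x_4$. Since $\beta\ne1$ and $\beta^3=1$, $\beta$ has order $3$.

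For $\gamma$, I put $w=x_5x_6x_7x_8$ and note $\gamma(w)=x_6x_7x_8w^{-1}=x_5^{-1}$. Iterating then gives $\gamma^5(x_5)=x_5$, and similarly for $x_6,x_7,x_8$. Concretely, $\gamma^4(x_5)=w^{-1}$, so $\gamma^5(x_5)=\gamma(w)^{-1}=x_5$, and then $\gamma^5(x_{5+k})=\gamma^{5+k}(x_5)=\gamma^k(x_5)=x_{5+k}$. Since $\gamma\ne1$ and $5$ is prime, $\gamma$ has order $5$. Conjugation by $\mu^m$ preserves order, so $\gamma_m$ also has order $5$.

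For commutation, the key observation is that $\mu$ is supported on $F(x_5,x_6)$. Hence $\gamma_m$ restricts to an automorphism of the free factor $F(x_5,\dots,x_8)$ and fixes every other $x_i$. Likewise $\alpha$ is supported on the factor $F(x_1,x_2)$ and $\beta$ on $F(x_3,x_4)$. These three factors are pairwise disjoint, and $n\ge8$ ensures all the basis elements used actually exist. Automorphisms preserving complementary free factors and fixing the other basis elements commute, since one checks equality on each basis element.

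For part (b), membership in $\SAut(F_n)$ follows by computing determinants. $\det\rho(\alpha)=1$ was already noted in \cref{lem:signed-permutations-plus}. $\rho(\beta)$ is a $2\times2$ block $\begin{pmatrix}-1&1\\-1&0\end{pmatrix}$ of determinant $1$. $\rho(\gamma)$ is a $4\times4$ companion-type matrix whose determinant I will compute as $1$: the last column is $(-1,-1,-1,-1)^T$, and cyclically shifting it to the first column contributes a sign $(-1)^3$, giving determinant $(-1)^3\cdot(-1)=1$. Finally, $\det\rho(\gamma_m)=\det\rho(\gamma)$.

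For the order, I use that the three factors commute with coprime orders $4,3,5$. This makes $\langle\alpha,\beta,\gamma_m\rangle\cong\Z/4\times\Z/3\times\Z/5\cong\Z/60$, with $\omega_m$ the image of $(1,1,1)$, a generator, so $|\omega_m|=60$. The hypotheses needed are exactly that each factor has its exact order and that the three commute. Cyclicity of the product uses only the commutation and coprimality, not any independence of the factors.

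For part (c), commutation gives $\omega_m^k=\alpha^k\beta^k\gamma_m^k$, so it suffices to check exponents:
\[
45\equiv1\pmod4,\quad 45\equiv0\pmod3,\quad 45\equiv0\pmod5,
\]
\[
40\equiv0\pmod4,\quad 40\equiv1\pmod3,\quad 40\equiv0\pmod5,
\]
\[
36\equiv0\pmod4,\quad 36\equiv0\pmod3,\quad 36\equiv1\pmod5.
\]

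The only step needing genuine care is verifying $\gamma^5=1$ and the exact order of $\beta$ by explicit free reduction. Everything else is formal.
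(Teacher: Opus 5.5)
Your proof is correct and follows the paper's argument essentially step for step. The paper also uses direct order computations for $\alpha$, $\beta$, and $\gamma$; your identity $\gamma(x_5x_6x_7x_8)=x_5^{-1}$ is the paper's observation that $\gamma$ cyclically permutes $x_5,x_6,x_7,x_8,(x_5x_6x_7x_8)^{-1}$. The remaining steps are also the paper's: commutation from disjoint supports, homological determinant $1$, and coprimality with reduction of the exponents $45,40,36$ modulo $4,3,5$. Only tidy up the stray ``wait'' in the $\beta^2$ computation; the value you end with, $\beta^2(x_3)=x_4$, is right.
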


\begin{proof}
Equation \eqref{eq:alpha-short} gives
$\alpha^2(x_1)=x_1^{-1}$ and $\alpha^2(x_2)=x_2^{-1}$, so $|\alpha|=4$.  From
\eqref{eq:beta-short},
\[
 \beta^2(x_3)=x_4,
 \qquad
 \beta^2(x_4)=x_4^{-1}x_3^{-1},
\]
so $|\beta|=3$.  If
\[
 y=(x_5x_6x_7x_8)^{-1},
\]
then $\gamma(y)=x_5$, and hence $\gamma$ cyclically permutes
$x_5,x_6,x_7,x_8,y$.  Thus $|\gamma|=|\gamma_m|=5$.
The three supports are disjoint, so the automorphisms commute.  Their
homological determinants are $1$, and hence $\omega_m\in\SAut(F_n)$.  Since the
orders $4,3,5$ are pairwise coprime, $|\omega_m|=60$.  Reducing the exponents
$45,40,36$ modulo $4,3,5$ gives \eqref{eq:recover-factors-short}.
\end{proof}

For $\delta\in\{+1,-1\}$, put
\begin{equation}
 \eps_\delta=\delta(-1)^{n-1}
\label{eq:epsdelta-short}
\end{equation}
and define the signed cyclic permutation $\zeta_\delta\in W_n$ by
\begin{equation}
 \zeta_\delta(x_i)=x_{i+1}\quad(1\le i<n),
 \qquad
 \zeta_\delta(x_n)=x_1^{\eps_\delta}.
\label{eq:zeta-short}
\end{equation}
For $\delta=+1$, this agrees with the automorphism $\zeta_+$ from
\cref{conv:two-generation-automorphisms}.  Then
\begin{equation}
 \det\rho(\zeta_\delta)=\delta.
\label{eq:zeta-det-short}
\end{equation}

\begin{lem}\label{lem:signed-permutations-minus}
Let $n\ge 8$.  Then
\begin{equation}
 \langle \zeta_-,\alpha\rangle=W_n.
\label{eq:signed-generation-minus-short}
\end{equation}
\end{lem}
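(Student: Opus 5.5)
We follow the template of \cref{lem:signed-permutations-plus}, now working in the full group $W_n$ and its quotient $W_n/E_n\cong(\Z/2\Z)\times S_n$ from \eqref{eq:Wquotients-short}. The plan has three steps. First, show that $E_n\le\langle\zeta_-,\alpha\rangle$. Second, show that the images of $\zeta_-$ and $\alpha$ generate $W_n/E_n$. Third, conclude, since a subgroup of $W_n$ that contains $E_n$ and surjects onto $W_n/E_n$ must equal $W_n$.

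For the first step, note that $\alpha^2$ inverts $x_1$ and $x_2$ and fixes every other basis element. Conjugating by $\zeta_-^k$ for $0\le k\le n-2$ gives, via $\chi\,\alpha^2\chi^{-1}$, the double sign change on $\{x_{k+1},x_{k+2}\}$. The extra sign in $\zeta_-(x_n)=x_1^{\eps_-}$ does not matter here, because a double sign change conjugated by a signed permutation is again the double sign change on the image indices; signs act trivially on sign changes. These adjacent double sign changes for $k=0,\dots,n-2$ generate $E_n\cong(\Z/2\Z)^{n-1}$.

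For the second step, we compute invariants under the map $\chi\mapsto(\eta(\chi),\sigma)$. The element $\alpha$ maps to $(-1,(1\,2))$. The element $\zeta_-$ maps to $(\eps_-,c)$, where $c=(1\,2\,\cdots\,n)$ and $\eps_-=-(-1)^{n-1}=(-1)^n$. Let $Q\le(\Z/2\Z)\times S_n$ be the subgroup generated by these two images. Projecting $Q$ to $S_n$ gives all of $S_n$, since an $n$-cycle and an adjacent transposition generate $S_n$. It remains to show that $Q$ contains $(-1,\mathrm{id})$, or equivalently that $Q$ is not the graph of a homomorphism $S_n\to\Z/2\Z$.

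The only homomorphisms $S_n\to\{\pm1\}$ are the trivial one and $\operatorname{sgn}$. The trivial one is excluded because $\alpha$ maps to $(-1,(1\,2))$ with nontrivial sign. For $\operatorname{sgn}$, we check the value at $c$: $\operatorname{sgn}(c)=(-1)^{n-1}$, whereas $\eps_-=(-1)^n$, which differs. Hence $Q$ is not a graph, so $Q$ contains $(\pm1,\mathrm{id})$ and therefore $Q=(\Z/2\Z)\times S_n$.

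Equivalently, the determinant $\det\rho(\zeta_-)=-1$ from \eqref{eq:zeta-det-short} shows that $\langle\zeta_-,\alpha\rangle\not\le W_n^+$. Combined with the surjection onto $S_n$ and the containment of $E_n$, this already forces the group to be all of $W_n$.

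The only delicate point is the second step: we must rule out $Q$ being an index-two diagonal subgroup, and the parity bookkeeping for $\eps_-$ there is where an error could slip in. I would present it through the determinant, as in the reformulation above. The first and third steps are routine.
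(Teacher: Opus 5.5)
Your proof is correct and is essentially the paper's argument: you obtain $E_n$ from the $\zeta_-$-conjugates of $\alpha^2$, then rule out the two graph subgroups of $(\Z/2\Z)\times S_n$ over $S_n$, the trivial graph via $\eta(\alpha)=-1$ and the sign graph via $\det\rho(\zeta_-)=-1$. One caution: your closing determinant-only reformulation does not suffice on its own, because $\{\chi\in W_n\mid \eta(\chi)=1\}$ contains $E_n$, maps onto $S_n$, and is not contained in $W_n^+$, so if you present the argument that way you must still use $\eta(\alpha)=-1$ to exclude it.
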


\begin{proof}
The element $\alpha^2$ inverts $x_1,x_2$.  Conjugating $\alpha^2$ by powers
of $\zeta_-$ gives the adjacent double sign changes, which generate $E_n$.
Put $L=\langle\zeta_-,\alpha\rangle$.  The image of $L/E_n$ projects onto
$S_n$ in $(\Z/2\Z)\times S_n$.  If it has nontrivial intersection with the
central $\Z/2\Z$, then it contains that factor and therefore equals
$(\Z/2\Z)\times S_n$.  Thus a proper subgroup with this projection must have
trivial intersection with the central factor and hence must be the graph of a
homomorphism $S_n\to\Z/2\Z$, necessarily either the trivial map or the sign
map.  The image of $\alpha$ excludes the trivial graph, while
\eqref{eq:zeta-det-short} shows that the sign-change parity of $\zeta_-$
differs from the permutation parity of the underlying $n$-cycle of $\zeta_-$,
excluding the sign graph.  Hence
$L/E_n=(\Z/2\Z)\times S_n$ and $L=W_n$.
\end{proof}

\begin{prop}\label{prop:generation-aut-saut}
Let $n\ge 8$ and $m\ge1$.  Then
\begin{enumerate}[label=\textup{(\alph*)}]
\item $\langle \zeta_+,\omega_m\rangle=\SAut(F_n)$;
\item $\langle \zeta_-,\omega_m\rangle=\Aut(F_n)$.
\end{enumerate}
\end{prop}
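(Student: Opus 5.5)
The plan follows the proof of \cref{thm:special-two-generated} for $n\ge4$, with $\Omega=\alpha\beta$ replaced by $\omega_m=\alpha\beta\gamma_m$. Let $L_\delta=\langle\zeta_\delta,\omega_m\rangle$. By \cref{lem:torsion-packing}(c), $L_\delta$ contains $\alpha=\omega_m^{45}$ and $\beta=\omega_m^{40}$; the factor $\gamma_m$ is simply discarded. Then $L_+\supseteq\langle\zeta_+,\alpha\rangle=W_n^+$ by \cref{lem:signed-permutations-plus}, which applies because $n\ge8\ge4$. Likewise $L_-\supseteq\langle\zeta_-,\alpha\rangle=W_n$ by \cref{lem:signed-permutations-minus}.

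Next we recover an elementary automorphism. As in the proof of \cref{thm:special-two-generated}, write $\beta=\nu\lambda$ with $\lambda\in W_n^+$, so that $\nu=\beta\lambda^{-1}=E_{x_4^{-1},x_3^{-1}}$ lies in $L_\delta$ for both signs. Since $\chi E_{ab}\chi^{-1}=E_{\chi(a),\chi(b)}$ for $\chi\in W_n^+$, \cref{lem:transitivity-Wplus} shows that the $W_n^+$-conjugates of $\nu$ exhaust $\mathfrak E_n$. By Gersten's theorem, \cref{prop:gersten-generation}(3), we get $\SAut(F_n)\le L_\delta$ for $\delta=\pm1$.

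For part (a), note that $\zeta_+$ and $\omega_m$ both lie in $\SAut(F_n)$, the latter by \cref{lem:torsion-packing}(b). Hence $L_+\le\SAut(F_n)$, and equality follows. For part (b), $L_-$ contains $\SAut(F_n)$ together with $\zeta_-$, and $\det\rho(\zeta_-)=-1$ by \eqref{eq:zeta-det-short}. Since $\SAut(F_n)$ has index two in $\Aut(F_n)$, it follows that $L_-=\Aut(F_n)$. (Alternatively, $W_n\le L_-$ already contains $\iota_1$, and $\mathfrak N_n\subseteq\langle W_n,\mathfrak E_n\rangle$.)

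Most of this is bookkeeping reusing earlier results. The step needing the most care is recovering $\alpha$ and $\beta$ individually from $\omega_m$. This uses the pairwise commutation and the coprime orders $4,3,5$ from \cref{lem:torsion-packing}, which in turn rely on the supports $\{1,2\}$, $\{3,4\}$, $\{5,\dots,8\}$ being disjoint; that is where $n\ge8$ is needed. The parameter $m$ never enters the argument.
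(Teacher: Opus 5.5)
Your proof is correct and follows essentially the same route as the paper. You extract $\alpha=\omega_m^{45}$ and $\beta=\omega_m^{40}$, obtain $W_n^+$ or $W_n$ from the signed-permutation lemmas, deduce $\SAut(F_n)\le L_\delta$, and finish with the determinant and index-two argument. The only differences are that you rerun the $\nu=\beta\lambda^{-1}$ and Gersten argument inline instead of citing \cref{thm:special-two-generated} (for $L_-$ the paper uses $\zeta_+\in W_n\le L_-$ to apply that theorem), and that you explicitly verify $L_+\le\SAut(F_n)$, which the paper leaves implicit.
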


\begin{proof}
By \cref{lem:torsion-packing}, each group
$\langle\zeta_\delta,\omega_m\rangle$ contains
$\alpha=\omega_m^{45}$ and $\beta=\omega_m^{40}$, and hence contains
$\alpha\beta$.  Therefore \cref{thm:special-two-generated} gives
\[
 \langle\zeta_+,\omega_m\rangle=\SAut(F_n).
\]

For the negative-sign case, \cref{lem:signed-permutations-minus} gives
$W_n\le\langle\zeta_-,\omega_m\rangle$.  In particular this group contains
$\zeta_+$, as well as $\alpha\beta$, so
\cref{thm:special-two-generated} implies that it contains $\SAut(F_n)$.
It also contains the determinant-$-1$ element $\zeta_-$.  Since
$\SAut(F_n)$ has index two in $\Aut(F_n)$, it follows that
$\langle\zeta_-,\omega_m\rangle=\Aut(F_n)$.
\end{proof}

\begin{cor}\label{cor:generation-out-sout}
Let $n\ge 8$ and $m\ge1$.  The following statements hold.
\begin{enumerate}[label=\textup{(\alph*)}]
\item The pair $(\overline{\zeta}_+,\overline{\omega}_m)$ generates $\SOut(F_n)$, and
the order of $\overline{\zeta}_+$ in $\SOut(F_n)$ is
\[
 |\overline{\zeta}_+|=
 \begin{cases}
 n,&n\text{ odd},\\
 2n,&n\text{ even};
 \end{cases}
\]
\item The pair $(\overline{\zeta}_-,\overline{\omega}_m)$ generates $\Out(F_n)$, and
the order of $\overline{\zeta}_-$ in $\Out(F_n)$ is
\[
 |\overline{\zeta}_-|=
 \begin{cases}
 2n,&n\text{ odd},\\
 n,&n\text{ even};
 \end{cases}
\]
\item The element $\overline{\omega}_m$ has order $60$ in $\Out(F_n)$ and,
equivalently, in the subgroup $\SOut(F_n)$.
\end{enumerate}
\end{cor}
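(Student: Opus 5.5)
Generation is immediate. \cref{prop:generation-aut-saut} gives $\langle\zeta_+,\omega_m\rangle=\SAut(F_n)$ and $\langle\zeta_-,\omega_m\rangle=\Aut(F_n)$. The quotient maps $\SAut(F_n)\to\SOut(F_n)$ and $\Aut(F_n)\to\Out(F_n)$ are surjective, so the images generate the outer groups. The work lies in computing orders in $\Out(F_n)$, where a power of an automorphism may become inner without being trivial.

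The plan is to squeeze each order between two bounds. For the upper bounds, I compute $\zeta_\delta^n$ directly: it sends each $x_i$ to $x_i^{\eps_\delta}$. So $\zeta_\delta^n=1$ if $\eps_\delta=1$, and $\zeta_\delta^{2n}=1$ with $\zeta_\delta^n$ the total inversion if $\eps_\delta=-1$. By \eqref{eq:epsdelta-short}, $\eps_+=(-1)^{n-1}$ equals $1$ exactly when $n$ is odd, and $\eps_-=-(-1)^{n-1}$ equals $1$ exactly when $n$ is even. This gives orders in $\Aut(F_n)$ of $n$ or $2n$ in the stated cases, and these are upper bounds in $\Out(F_n)$. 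By \cref{lem:torsion-packing}, $|\omega_m|=60$ in $\Aut(F_n)$, which gives the upper bound $60$ for $|\overline\omega_m|$.

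For the lower bounds I pass to $\GL(n,\Z)$ via $\rho$. Since $\Inn(F_n)\le\ker\rho$, the order of $\rho(\varphi)$ divides $|\overline\varphi|$. For $\zeta_\delta$, the matrix $\rho(\zeta_\delta)$ is a signed cyclic permutation matrix. Its $k$-th power, for $0<k<n$, moves the basis vectors, so it is nontrivial. When $\eps_\delta=-1$, its $n$-th power is $-I\ne I$. So $\rho(\zeta_\delta)$ has exactly the order computed above.

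For $\omega_m$, the matrices $\rho(\alpha),\rho(\beta),\rho(\gamma_m)$ act on the disjoint coordinate blocks $\{1,2\}$, $\{3,4\}$, $\{5,\dots,8\}$ and commute. It suffices to see that they have orders $4$, $3$, $5$ respectively, because coprimality then gives $|\rho(\omega_m)|=60$. $\rho(\alpha)$ restricted to its block is the rotation $\begin{pmatrix}0&-1\\1&0\end{pmatrix}$ of order $4$. $\rho(\beta)$ restricted to its block is $\begin{pmatrix}-1&1\\-1&0\end{pmatrix}$, which has trace $-1$ and determinant $1$, so it has order $3$. $\rho(\gamma)$ is the companion matrix of $1+t+t^2+t^3+t^4$, which has order $5$; its conjugate $\rho(\gamma_m)$ therefore also has order $5$. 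The order of $\overline\omega_m$ is the same in $\SOut(F_n)$ and in $\Out(F_n)$, since the former is a subgroup.

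The only step I expect to need care is the lower bound. Naively, some power of these torsion automorphisms could be inner. The homology argument avoids this entirely, so the main check reduces to the small matrix order computations and the sign bookkeeping for $\eps_\delta$.
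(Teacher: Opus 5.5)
Your proposal is correct and follows essentially the same route as the paper: generation is pushed down from \cref{prop:generation-aut-saut}, and each order is squeezed between the order in $\Aut(F_n)$ and the order of the homology matrix under $\rho$, which kills $\Inn(F_n)$. The only difference is that you verify the orders of $Z_\delta$ and $H_m$ directly instead of citing \cref{lem:homology-matrices}(b).
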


\begin{proof}
The generation statements follow from \cref{prop:generation-aut-saut}.
By \cref{lem:torsion-packing}, $\omega_m$ has order $60$ in $\Aut(F_n)$,
and by \cref{lem:homology-matrices}(b), the homology matrix
$H_m=\rho(\omega_m)$ has order $60$.  Therefore the image
$\overline{\omega}_m$ has order $60$ in $\Out(F_n)$ and hence
also in $\SOut(F_n)$.  Also,
\[
 \zeta_\delta^n(x_i)=x_i^{\eps_\delta}
\]
for every $i$.  Hence $\zeta_\delta$ has order $n$ when $\eps_\delta=1$ and
order $2n$ when $\eps_\delta=-1$.  The homology matrix of $\zeta_\delta$
has the same order, so passage to the relevant outer quotient does not shorten
this order.  Substituting
$\eps_+=(-1)^{n-1}$ and $\eps_-=(-1)^n$ gives the stated orders in
$\SOut(F_n)$ and $\Out(F_n)$, respectively.
\end{proof}

\section{Commutator traces and the main theorems}\label{sec:main}

Assume throughout this section that $n\ge 8$, and put
$e_i=[x_i]\in H_1(F_n;\Z)$.

\begin{defn}[Matrix notation for the homology construction]
\label{def:homology-matrices}
We use the following notation relative to the ordered basis
$(e_1,\ldots,e_n)$.
\begin{enumerate}[label=\textup{(\alph*)}]
\item On the coordinate pairs $(e_1,e_2)$ and $(e_3,e_4)$, respectively, put
\begin{equation}
 A=
 \begin{pmatrix}0&-1\\1&0\end{pmatrix},
 \qquad
 B=
 \begin{pmatrix}-1&1\\-1&0\end{pmatrix}.
\label{eq:AB-short}
\end{equation}
\item For $m\ge1$, on $\langle e_5,e_6,e_7,e_8\rangle$, put
\begin{equation}
 U_m=
 \begin{pmatrix}
 1&m&0&0\\0&1&0&0\\0&0&1&0\\0&0&0&1
 \end{pmatrix},
 \qquad
 P=
 \begin{pmatrix}
 0&0&0&-1\\1&0&0&-1\\0&1&0&-1\\0&0&1&-1
 \end{pmatrix},
\label{eq:UP-short}
\end{equation}
and define
\begin{equation}
 D_m=U_mPU_m^{-1}
 =
 \begin{pmatrix}
 m&-m^2&0&-m-1\\
 1&-m&0&-1\\
 0&1&0&-1\\
 0&0&1&-1
 \end{pmatrix}.
\label{eq:Dm-short}
\end{equation}
\item Define the $n\times n$ matrix
\begin{equation}
 H_m=\diag(A,B,D_m,I_{n-8}).
\label{eq:Hm-short}
\end{equation}
\item For $\delta\in\{+1,-1\}$, with
$\eps_\delta=\delta(-1)^{n-1}$ as in \eqref{eq:epsdelta-short}, define the
$n\times n$ signed cyclic permutation matrix
\begin{equation}
 Z_\delta=
 \begin{pmatrix}
 0&0&\cdots&0&\eps_\delta\\
 1&0&\cdots&0&0\\
 0&1&\ddots&0&0\\
 \vdots&&\ddots&\ddots&\vdots\\
 0&0&\cdots&1&0
 \end{pmatrix}.
\label{eq:Zdelta-short}
\end{equation}
\item For $m\ge1$ and $\delta\in\{+1,-1\}$, define
\begin{equation}
 K_{\delta,m}
 =Z_\delta^{-1}H_m^{-1}Z_\delta H_m.
\label{eq:Kdelta-short}
\end{equation}
\end{enumerate}
\end{defn}

\begin{lem}\label{lem:homology-matrices}
With the notation of \cref{def:homology-matrices}, the following statements
hold.
\begin{enumerate}[label=\textup{(\alph*)}]
\item The matrices of \cref{def:homology-matrices} are related to the automorphisms from
\cref{conv:torsion-automorphisms} by
\[
 \rho(\alpha)=\diag(A,I_{n-2}),
 \qquad
 \rho(\beta)=\diag(I_2,B,I_{n-4}),
\]
\[
 \rho(\mu^m)=\diag(I_4,U_m,I_{n-8}),
 \qquad
 \rho(\gamma)=\diag(I_4,P,I_{n-8}),
\]
\[
 \rho(\gamma_m)=\diag(I_4,D_m,I_{n-8}),
 \qquad
 \rho(\omega_m)=H_m,
 \qquad
 \rho(\zeta_\delta)=Z_\delta.
\]
In particular, $K_{\delta,m}$ is the homology matrix of
$[\overline{\zeta}_\delta,\overline{\omega}_m]$.
\item The matrix $H_m$ has order $60$.  Moreover,
\[
 \det Z_\delta=\delta,
 \qquad
 |Z_\delta|=
 \begin{cases}
 n,&\eps_\delta=1,\\
 2n,&\eps_\delta=-1.
 \end{cases}
\]
\end{enumerate}
\end{lem}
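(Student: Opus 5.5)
Part~(a) comes down to direct column computations in the convention of \cref{conv:composition}: the $j$-th column of $\rho(\varphi)$ is the abelianization of $\varphi(x_j)$.

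For $\alpha$ we have $x_1\mapsto x_2$ and $x_2\mapsto x_1^{-1}$, so the columns are $e_2$ and $-e_1$. This is $A$.

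For $\beta$, $x_3\mapsto x_4^{-1}x_3^{-1}$ gives the column $(-1,-1)^T$, and $x_4\mapsto x_3$ gives $(1,0)^T$. This is $B$.

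For $\mu^m$, $x_6\mapsto x_5^mx_6$ gives the column $(m,1,0,0)^T$ and the other basis vectors are fixed. This is $U_m$.

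For $\gamma$, the columns are $e_6$, $e_7$, $e_8$, and $-(e_5+e_6+e_7+e_8)$. This is $P$.

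Since $\rho$ is a homomorphism, $\rho(\gamma_m)=\rho(\mu)^m\rho(\gamma)\rho(\mu)^{-m}=U_mPU_m^{-1}$. I will verify the displayed product \eqref{eq:Dm-short} by multiplying out $U_mP$ and then $(U_mP)U_m^{-1}$, using $U_m^{-1}=U_{-m}$.

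Multiplicativity together with the block-diagonal supports then gives $\rho(\omega_m)=\diag(A,I)\diag(I,B,I)\diag(I,D_m,I)=H_m$.

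For $\zeta_\delta$, the columns are $e_{i+1}$ for $i<n$ and $\eps_\delta e_1$ for the last one. This is $Z_\delta$.

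Finally, $\rho$ factors through $\Out(F_n)$, so the homology matrix of $[\overline\zeta_\delta,\overline\omega_m]=\overline\zeta_\delta^{-1}\overline\omega_m^{-1}\overline\zeta_\delta\overline\omega_m$ is $Z_\delta^{-1}H_m^{-1}Z_\delta H_m=K_{\delta,m}$.

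For part~(b), the order of the block-diagonal matrix $H_m$ is the least common multiple of the orders of its blocks. We have $A^2=-I$, so $|A|=4$. For $B$, $\tr B=-1$ and $\det B=1$, so its characteristic polynomial is $t^2+t+1$ and $B^2+B+I=0$; since $B\neq I$, this gives $|B|=3$. Because $D_m$ is conjugate to $P$, $|D_m|=|P|$, and $P$ is the companion matrix of $t^4+t^3+t^2+t+1$. This polynomial divides $t^5-1$, so $P^5=I$, and $P\ne I$. Hence $|H_m|=\operatorname{lcm}(4,3,5)=60$.

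For $Z_\delta$, expanding the determinant along the last column gives the sign $(-1)^{n+1}$ for the cyclic shift, times $\eps_\delta$. So $\det Z_\delta=(-1)^{n-1}\eps_\delta=\delta$, using \eqref{eq:epsdelta-short}.

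For the order, $Z_\delta^k e_1=e_{k+1}$ for $k<n$ and $Z_\delta^ne_i=\eps_\delta e_i$ for every $i$. Thus $Z_\delta^n=\eps_\delta I$, and no power $Z_\delta^k$ with $0<k<n$ is scalar, because it moves $e_1$ off the line $\Z e_1$. Consequently $|Z_\delta|=n$ if $\eps_\delta=1$. If $\eps_\delta=-1$, then $Z_\delta^n=-I$, and any power equal to $I$ must be a multiple of $n$; so $|Z_\delta|=2n$.

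None of these steps is a real obstacle. The only point needing care is the explicit product for $D_m$, which is a routine $4\times4$ multiplication that I will simply write out.
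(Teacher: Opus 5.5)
Your proof is correct and follows the paper's route: you read off each matrix column by column under the convention of \cref{conv:composition}, use multiplicativity of $\rho$ for $\gamma_m$, $\omega_m$ and the commutator, take the lcm of the block orders for $H_m$, and use $Z_\delta^n=\eps_\delta I_n$ for the order of $Z_\delta$. The only differences are minor ones of detail. You compute $\det Z_\delta$ by cofactor expansion, where the paper cites \eqref{eq:zeta-det-short}, and you verify the orders of $A$, $B$, $P$ (and the product $D_m=U_mPU_m^{-1}$) explicitly, where the paper simply asserts them.
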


\begin{proof}
Part~\textup{(a)} follows directly from
\cref{conv:torsion-automorphisms}, the definition of $\zeta_\delta$ in
\eqref{eq:zeta-short}, and the column convention in \cref{conv:composition}.
For part~\textup{(b)}, the matrices $A$, $B$, and $P$ have respective orders
$4$, $3$, and $5$, and $D_m$ is conjugate to $P$.  Hence the block diagonal
matrix $H_m$ has order $\operatorname{lcm}(4,3,5)=60$.
The matrix $Z_\delta$ has the underlying permutation of an $n$-cycle and
satisfies $Z_\delta^n=\eps_\delta I_n$.  This gives the asserted order, while
\eqref{eq:zeta-det-short} and part~\textup{(a)} give
$\det Z_\delta=\delta$.
\end{proof}

\begin{prop}\label{prop:trace-formulas}
Let $n\ge 8$, $m\ge1$, and $\delta\in\{+1,-1\}$.  The following statements hold.
\begin{enumerate}[label=\textup{(\alph*)}]
\item if $n=8$, then
\begin{equation}
 \tr(K_{\delta,m})=-m^3+m^2+3,
 \qquad
 \tr(K_{\delta,m}^{-1})=2m^2+3;
\label{eq:trace-eight-short}
\end{equation}
\item if $n\ge9$, then
\begin{equation}
 \tr(K_{\delta,m})=-m^3+m^2+n-7,
 \qquad
 \tr(K_{\delta,m}^{-1})=2m^2+n-6.
\label{eq:trace-large-short}
\end{equation}
\end{enumerate}
\end{prop}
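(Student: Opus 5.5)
The plan is to compute both traces directly, using the fact that conjugating by the signed cyclic permutation $Z_\delta$ only shifts indices. By \eqref{eq:Zdelta-short} we have $Z_\delta e_j=e_{j+1}$ for $j<n$ and $Z_\delta e_n=\eps_\delta e_1$. Hence for any $n\times n$ matrix $M$,
\[
 (Z_\delta^{-1}MZ_\delta)_{ij}=s_is_j\,M_{\hat\imath,\hat\jmath},
\]
where $\hat k=k+1$ for $k<n$, $\hat n=1$, $s_k=1$ for $k<n$, and $s_n=\eps_\delta$. Therefore
\[
 \tr(K_{\delta,m})=\sum_{i,j}s_is_j\,(H_m^{-1})_{\hat\imath,\hat\jmath}\,(H_m)_{ji},
 \qquad
 \tr(K_{\delta,m}^{-1})=\tr(Z_\delta^{-1}H_mZ_\delta H_m^{-1})
 =\sum_{i,j}s_is_j\,(H_m)_{\hat\imath,\hat\jmath}\,(H_m^{-1})_{ji}.
\]
Since $H_m$ and $H_m^{-1}$ are block diagonal with blocks on $\{1,2\}$, $\{3,4\}$, $\{5,\dots,8\}$ and the singletons $9,\dots,n$, a pair $(i,j)$ contributes only if $j,i$ lie in a common block and $\hat\imath,\hat\jmath$ also lie in a common block. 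So the first step is to list these admissible pairs.

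For the entries, I would record
\[
 A^{-1}=\begin{pmatrix}0&1\\-1&0\end{pmatrix},\qquad
 B^{-1}=\begin{pmatrix}0&-1\\1&-1\end{pmatrix},
\]
and compute $D_m^{-1}=U_mP^{-1}U_m^{-1}$ explicitly. Here $P^{-1}$ is the companion-type matrix of the inverse of the $5$-cycle action, with $e_5\mapsto -e_5+\dots$ read off from $\gamma^{-1}$ in \cref{conv:torsion-automorphisms}.

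The admissible pairs then split into three groups.

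\emph{Pairs $(i,j)$ with $i,j\in\{1,2,3,4\}$.} These pair a $2\times2$ block with the shifted index set, and only the pairs landing in blocks survive. For example, $(2,2)$ uses $H_{22}=0$, and $(1,1)$ uses $(H^{-1})_{22}=0$.

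\emph{Pairs with $i,j\in\{5,\dots,8\}$.} These require $\hat\imath,\hat\jmath\in\{6,7,8\}$ when $n\ge9$, or the $D_m$ block when $n=8$. They produce the $m$-dependent terms $-m^3+m^2$ in $\tr(K_{\delta,m})$ and $2m^2$ in $\tr(K_{\delta,m}^{-1})$. The cubic term should come from the product of the entry $-m^2$ of $D_m$ with an entry of $D_m^{-1}$ carrying $m$.

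\emph{Boundary and identity pairs.} These are the indices $8\to9$, the identity range $9,\dots,n$, and the wrap $n\to1$. For $n\ge9$, the diagonal indices $i=9,\dots,n-1$ each contribute $1$, giving $n-9$. The index $i=j=n$ pairs $H_{nn}=1$ with $(H^{-1})_{11}=0$, so $\eps_\delta$ never enters. This explains the independence from $\delta$. It also explains the affine dependence on $n$: the constants $n-7$ and $n-6$ should come out as $(n-9)$ plus a small contribution from the remaining boundary pairs.

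For $n=8$ the wrap $8\to1$ links the $D_m$ block to the $A$ block. Here I would check that $\eps_\delta$ appears only through terms with $s_is_j=\eps_\delta^2=1$, or through terms multiplied by a zero entry of $A$ or $A^{-1}$. The surviving contributions then give the constants $3$ and $3$.

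The main obstacle is this boundary bookkeeping: the transitions $2\to3$, $4\to5$, $8\to9$ and $n\to1$, especially the separate case $n=8$ where the wrap hits a nonidentity block. I would handle it by tabulating, for each block pair, the sub-sum as an explicit small matrix product. For $\tr(K_{\delta,m})$ this means traces of products of submatrices of $H_m$ with shifted submatrices of $H_m^{-1}$. The $D_m$-block computation I would verify symbolically in $m$, and the remaining constants I would confirm by a direct check at one value, say $m=1$, $n=8,9$.
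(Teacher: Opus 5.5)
Your plan is essentially the paper's proof. Both compute the traces directly, using that $H_m$ is block diagonal and $Z_\delta$ is a signed cyclic shift, and both note that $\eps_\delta$ can only enter squared. Your double sum, together with the cyclic rewriting $\tr(K_{\delta,m}^{-1})=\tr(Z_\delta^{-1}H_mZ_\delta H_m^{-1})$, just reorganizes the paper's coordinate-by-coordinate table of diagonal entries.

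One correction to your forecast of where the terms come from. In your sums the pairs $i,j\in\{5,6,7\}$ contribute $-m^3+m^2+2$ to $\tr(K_{\delta,m})$ and $2m^2+m+3$ to $\tr(K_{\delta,m}^{-1})$. The extra $+m$ cancels against the $-m$ from the transition pair $(4,4)$. So the $m$-dependence of $\tr(K_{\delta,m}^{-1})$ is not confined to the $D_m$ block, and the constants are not purely boundary contributions. Your explicit tabulation would reveal this anyway.
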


\begin{proof}
Since $H_m$ is block diagonal and $Z_\delta$ is a signed cyclic shift, the
multiplication in \eqref{eq:Kdelta-short} can be carried out one coordinate at
a time.  For example, when $n\ge9$,
\[
 H_m e_5=me_5+e_6,
 \qquad
 Z_\delta H_m e_5=me_6+e_7,
\]
and a direct calculation in the $D_m$ block gives
\[
 H_m^{-1}(me_6+e_7)
 =m(m^2+m+2)e_5+(m^2+1)e_6+m^2e_7+m^2e_8.
\]
Applying $Z_\delta^{-1}$ therefore shows that the fifth diagonal entry of
$K_{\delta,m}$ is $m^2+1$.  The other entries are obtained in the same way.
For $n\ge9$, the first eight diagonal entries are as follows:
\begin{equation}
\begin{array}{c|cccccccc}
 i&1&2&3&4&5&6&7&8\\ \hline
 (K_{\delta,m})_{ii}
   &0&0&1&0&m^2+1&1-m^3&0&-1\\
 (K_{\delta,m}^{-1})_{ii}
   &0&0&0&-m&2m^2+2m+1&1&1-m&0.
\end{array}
\label{eq:diagonal-large-short}
\end{equation}
For $9\le i<n$, both diagonal entries are $1$, while the $n$-th diagonal
entry of each matrix is $0$; the first range is empty when $n=9$.  Summing
gives \eqref{eq:trace-large-short}.

For $n=8$, the identity tail is absent and the cyclic shift wraps directly
from the $D_m$ block to the $A$ block.  Direct multiplication gives
\begin{equation}
\begin{array}{c|cccccccc}
 i&1&2&3&4&5&6&7&8\\ \hline
 (K_{\delta,m})_{ii}
   &0&0&1&0&m^2+1&1-m^3&0&0\\
 (K_{\delta,m}^{-1})_{ii}
   &0&0&0&-m&2m^2+2m+1&1&1-m&0.
\end{array}
\label{eq:diagonal-eight-short}
\end{equation}
This yields \eqref{eq:trace-eight-short}.  Whenever the cyclic shift wraps
around, the sign $\eps_\delta$ occurs once from $Z_\delta$ and once from
$Z_\delta^{-1}$, so it cancels from the diagonal coefficients.
\end{proof}

\begin{cor}\label{cor:commutators-separated}
Let $n\ge 8$, let $m,\ell\ge1$ with $m\ne\ell$, and let
$\delta\in\{+1,-1\}$.  Then $K_{\delta,m}$ is conjugate in
$\GL(n,\Z)$ to neither $K_{\delta,\ell}$ nor $K_{\delta,\ell}^{-1}$.
\end{cor}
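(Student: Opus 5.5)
Trace is a conjugacy invariant in $\GL(n,\Z)$, so the plan is to compare the traces from \cref{prop:trace-formulas}. If $K_{\delta,m}$ were conjugate to $K_{\delta,\ell}$, then $\tr(K_{\delta,m})=\tr(K_{\delta,\ell})$. If $K_{\delta,m}$ were conjugate to $K_{\delta,\ell}^{-1}$, then $\tr(K_{\delta,m})=\tr(K_{\delta,\ell}^{-1})$ and also, after inverting, $\tr(K_{\delta,m}^{-1})=\tr(K_{\delta,\ell})$. It therefore suffices to rule out these numerical coincidences for $m\ne\ell$, $m,\ell\ge1$.

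Write $c=3$ when $n=8$ and $c=n-7$ when $n\ge9$, so that $\tr(K_{\delta,m})=f(m)+c$ with $f(m)=-m^3+m^2$. Likewise $\tr(K_{\delta,m}^{-1})=2m^2+c'$, where $c'=3$ for $n=8$ and $c'=n-6=c+1$ for $n\ge9$. Note $c'-c\in\{0,1\}$.

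\emph{Case 1: conjugacy to $K_{\delta,\ell}$.} The second traces give $2m^2+c'=2\ell^2+c'$, so $m^2=\ell^2$. Since $m,\ell\ge1$, this forces $m=\ell$, a contradiction. (Alternatively, $f$ is strictly decreasing on $m\ge1$, since $f(m+1)-f(m)=-3m^2-m<0$.)

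\emph{Case 2: conjugacy to $K_{\delta,\ell}^{-1}$.} We need $-m^3+m^2+c=2\ell^2+c'$. Since $m\ge1$, we have $-m^3+m^2\le0$. Also $2\ell^2+c'-c\ge2$. These two facts are incompatible, so this case is impossible.

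There is no real obstacle beyond the earlier trace computation. The only care needed is to track the constants $c$ and $c'$ separately for $n=8$ and $n\ge9$, which the sign argument in Case 2 handles uniformly. This argument in fact rules out Case 2 even when $m=\ell$.
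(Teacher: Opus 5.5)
Your proposal is correct and is essentially the paper's proof. Both arguments compare the traces from \cref{prop:trace-formulas}. Conjugacy to $K_{\delta,\ell}$ is excluded by injectivity in $m$: you use the inverse trace $2m^2+c'$, while the paper uses the strict monotonicity of $-m^3+m^2$, which you also note. Conjugacy to $K_{\delta,\ell}^{-1}$ is excluded by the same inequality in both, namely $\tr(K_{\delta,m})\le c<c'+2\le\tr(K_{\delta,\ell}^{-1})$.
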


\begin{proof}
The function
\[
 f(m)=-m^3+m^2
\]
is strictly decreasing on the positive integers because
\[
 f(m+1)-f(m)=-3m^2-m<0.
\]
Hence $\tr(K_{\delta,m})\ne\tr(K_{\delta,\ell})$ for $m\ne\ell$.
Furthermore, if $n\ge9$,
\[
 \tr(K_{\delta,m})\le n-7<n-4\le\tr(K_{\delta,\ell}^{-1}),
\]
while for $n=8$,
\[
 \tr(K_{\delta,m})\le3<5\le\tr(K_{\delta,\ell}^{-1}).
\]
Trace invariance under conjugacy gives the conclusion.
\end{proof}

For $m\ge1$, put
\begin{equation}
 (a_m,b_m)=(\overline{\zeta}_-,\overline{\omega}_m)\in\Out(F_n)^2,
 \qquad
 (a_m^+,b_m^+)=(\overline{\zeta}_+,\overline{\omega}_m)\in\SOut(F_n)^2.
\label{eq:outer-families-short}
\end{equation}
Thus
\begin{equation}
 (\rho(a_m),\rho(b_m))=(Z_-,H_m),
 \qquad
 (\rho(a_m^+),\rho(b_m^+))=(Z_+,H_m).
\label{eq:linear-families-short}
\end{equation}

\Cref{cor:commutators-separated} gives the corresponding Nielsen inequivalence
in the linear quotients and leads to Theorem~A.

\begin{thm}\label{thm:linear-nielsen}
Let $n\ge 8$.  The following statements hold.
\begin{enumerate}[label=\textup{(\alph*)}]
\item The pairs
\[
 (Z_-,H_m)=(\rho(a_m),\rho(b_m)),
 \qquad m\ge1,
\]
are generating pairs of $\GL(n,\Z)$ and represent pairwise distinct Nielsen
equivalence classes.  They satisfy
\[
 |H_m|=60,
 \qquad
 |Z_-|=
 \begin{cases}
 n,&n\text{ even},\\
 2n,&n\text{ odd}.
 \end{cases}
\]
\item The pairs
\[
 (Z_+,H_m)=(\rho(a_m^+),\rho(b_m^+)),
 \qquad m\ge1,
\]
are generating pairs of $\SL(n,\Z)$ and represent pairwise distinct Nielsen
equivalence classes.  They satisfy
\[
 |H_m|=60,
 \qquad
 |Z_+|=
 \begin{cases}
 n,&n\text{ odd},\\
 2n,&n\text{ even}.
 \end{cases}
\]
\end{enumerate}
In particular, both $\GL(n,\Z)$ and $\SL(n,\Z)$ have countably infinitely
many Nielsen equivalence classes of generating pairs.
\end{thm}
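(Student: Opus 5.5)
The proof assembles results already established; no new computation is needed.

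\emph{Generation.} By \cref{cor:generation-out-sout}, $(\overline\zeta_-,\overline\omega_m)$ generates $\Out(F_n)$ and $(\overline\zeta_+,\overline\omega_m)$ generates $\SOut(F_n)$. Since $\rho:\Out(F_n)\to\GL(n,\Z)$ is surjective and maps $\SOut(F_n)$ onto $\SL(n,\Z)$ (\cref{conv:free-group-notation}), the images $(Z_-,H_m)$ and $(Z_+,H_m)$ generate $\GL(n,\Z)$ and $\SL(n,\Z)$, respectively; the identification of the images is \eqref{eq:linear-families-short} together with \cref{lem:homology-matrices}(a). Equivalently, one can apply $\rho$ directly to \cref{prop:generation-aut-saut}.

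\emph{Orders.} \cref{lem:homology-matrices}(b) gives $|H_m|=60$ and $|Z_\delta|\in\{n,2n\}$, according as $\eps_\delta=1$ or $-1$. With $\eps_-=(-1)^n$, the matrix $Z_-$ has order $n$ for even $n$ and $2n$ for odd $n$. With $\eps_+=(-1)^{n-1}$, the matrix $Z_+$ has order $n$ for odd $n$ and $2n$ for even $n$.

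\emph{Inequivalence.} Fix $\delta$ and suppose $m\ne\ell$ with $(Z_\delta,H_m)\sim_N(Z_\delta,H_\ell)$ in $G=\GL(n,\Z)$ or $\SL(n,\Z)$. By \cref{prop:commutator-invariant}, $[Z_\delta,H_\ell]=K_{\delta,\ell}$ is conjugate in $G$ to $K_{\delta,m}^{\pm1}$. Note that \eqref{eq:Kdelta-short} matches the convention $[g,h]=g^{-1}h^{-1}gh$. Conjugacy in $\SL(n,\Z)$ implies conjugacy in $\GL(n,\Z)$, so this contradicts \cref{cor:commutators-separated}. Hence the classes are pairwise distinct.

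\emph{Cardinality.} This yields infinitely many Nielsen classes of generating pairs. There are at most countably many, because $\GL(n,\Z)$ is countable and so has only countably many pairs. This gives exactly countably infinitely many classes.

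The only delicate point is the direction of the conjugacy argument: one must check that the relevant group is the ambient $G$ and that conjugacy inside $\SL(n,\Z)$ is handled by passing to $\GL(n,\Z)$. Everything else is direct citation.
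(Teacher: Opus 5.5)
Your proposal is correct and follows the same route as the paper's proof. You get generation from \cref{cor:generation-out-sout} via surjectivity of $\rho$, the orders from \cref{lem:homology-matrices}(b) with $\eps_-=(-1)^n$ and $\eps_+=(-1)^{n-1}$, inequivalence by combining \cref{prop:commutator-invariant} with \cref{cor:commutators-separated} (passing from $\SL(n,\Z)$-conjugacy to $\GL(n,\Z)$-conjugacy in part~(b)), and the count of classes from countability of the ambient groups.
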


\begin{proof}
By \cref{cor:generation-out-sout} and the surjectivity of the homology maps in
\cref{conv:free-group-notation}, every pair $(Z_-,H_m)$ generates
$\GL(n,\Z)$ and every pair $(Z_+,H_m)$ generates $\SL(n,\Z)$.  The order
assertions follow from \cref{lem:homology-matrices} and
$\eps_-=(-1)^n$, $\eps_+=(-1)^{n-1}$.

For part~\textup{(a)}, let $m\ne\ell$.  If $(Z_-,H_m)$ and $(Z_-,H_\ell)$
were Nielsen equivalent in $\GL(n,\Z)$, then
\cref{prop:commutator-invariant} would imply that their commutators
$K_{-,m}$ and $K_{-,\ell}$ are conjugate in $\GL(n,\Z)$ up to inversion.
This contradicts \cref{cor:commutators-separated}.  Hence the displayed pairs
represent pairwise distinct Nielsen equivalence classes in $\GL(n,\Z)$.

For part~\textup{(b)}, suppose that $(Z_+,H_m)$ and $(Z_+,H_\ell)$ were
Nielsen equivalent in $\SL(n,\Z)$ for some $m\ne\ell$.  Their commutators
$K_{+,m}$ and $K_{+,\ell}$ would then be conjugate in $\SL(n,\Z)$ up to
inversion.  Such a conjugacy is, in particular, a conjugacy in $\GL(n,\Z)$.
Equivalently, for elements of $\SL(n,\Z)$, non-conjugacy in $\GL(n,\Z)$
implies non-conjugacy in $\SL(n,\Z)$.  This again contradicts
\cref{cor:commutators-separated}.  Thus the displayed pairs represent
pairwise distinct Nielsen equivalence classes in $\SL(n,\Z)$.  Since
$\GL(n,\Z)$ and $\SL(n,\Z)$ are countable, the total number of Nielsen
equivalence classes of generating pairs in each group is countably infinite.
\end{proof}

The following theorem proves Theorem~B.

\begin{thm}\label{thm:main}
Let $n\ge 8$.  The following statements hold.
\begin{enumerate}[label=\textup{(\alph*)}]
\item $\Out(F_n)$ has countably infinitely many Nielsen equivalence classes of
generating pairs.  The pairs $(a_m,b_m)$ from
\eqref{eq:outer-families-short} represent pairwise distinct classes and satisfy
\[
 |b_m|=60,
 \qquad
 |a_m|=
 \begin{cases}
 n,&n\text{ even},\\
 2n,&n\text{ odd};
 \end{cases}
\]
Moreover, $a_m=a_\ell$ for all $m,\ell\ge1$.
\item $\SOut(F_n)$ has countably infinitely many Nielsen equivalence classes
of generating pairs.  The pairs $(a_m^+,b_m^+)$ from
\eqref{eq:outer-families-short} represent pairwise distinct classes and satisfy
\[
 |b_m^+|=60,
 \qquad
 |a_m^+|=
 \begin{cases}
 n,&n\text{ odd},\\
 2n,&n\text{ even};
 \end{cases}
\]
Moreover, $a_m^+=a_\ell^+$ for all $m,\ell\ge1$.
\item the minimal number of generators of each of $\Out(F_n)$ and
$\SOut(F_n)$ is $2$.
\end{enumerate}
\end{thm}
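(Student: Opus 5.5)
The plan is to deduce everything from the linear statement \cref{thm:linear-nielsen}, using the fact that Nielsen equivalence is preserved under homomorphisms. For generation and orders, \cref{cor:generation-out-sout} already shows that $(a_m,b_m)=(\overline\zeta_-,\overline\omega_m)$ generates $\Out(F_n)$ and that $(a_m^+,b_m^+)=(\overline\zeta_+,\overline\omega_m)$ generates $\SOut(F_n)$. The same corollary gives the orders: $|b_m|=|b_m^+|=60$, and $|\overline\zeta_\delta|$ equals $n$ or $2n$ according to the sign $\eps_\delta$. Substituting $\eps_-=(-1)^n$ and $\eps_+=(-1)^{n-1}$ yields the stated case distinctions. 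The equalities $a_m=a_\ell$ and $a_m^+=a_\ell^+$ hold by construction, since the first coordinate $\overline\zeta_\delta$ does not depend on $m$.

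For the pairwise inequivalence, I would observe that if $\psi:G\to Q$ is a homomorphism and $\mathbf g\sim_N\mathbf h$ in $G$, then $\psi(\mathbf g)\sim_N\psi(\mathbf h)$ in $Q$. This is because each elementary Nielsen transformation commutes with coordinatewise application of $\psi$; equivalently, it follows from \cref{prop:basic-nielsen-facts}(c), since $\theta_{\psi(\mathbf h)}=\psi\circ\theta_{\mathbf g}\circ\Phi$. I would apply this with $\psi=\rho:\Out(F_n)\to\GL(n,\Z)$. By \eqref{eq:linear-families-short}, a Nielsen equivalence $(a_m,b_m)\sim_N(a_\ell,b_\ell)$ with $m\ne\ell$ would then give $(Z_-,H_m)\sim_N(Z_-,H_\ell)$ in $\GL(n,\Z)$, which contradicts \cref{thm:linear-nielsen}(a). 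The $\SOut$ case is the same argument, using the restriction $\rho:\SOut(F_n)\to\SL(n,\Z)$ and \cref{thm:linear-nielsen}(b). Alternatively, one can argue directly: the commutator $[a_m,b_m]$ maps to $K_{-,m}$, and any conjugacy in $\Out(F_n)$ would push forward to a conjugacy in $\GL(n,\Z)$, contradicting \cref{cor:commutators-separated}. Since $\Out(F_n)$ is countable, there are at most countably many classes, so the number of classes is exactly countably infinite.

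For part (c), the generating pairs above show $d\le2$. Each group surjects onto the noncyclic group $\GL(n,\Z)$ or $\SL(n,\Z)$, so neither group is cyclic, and hence $d=2$. This could also be quoted directly from \cref{thm:special-two-generated}.

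I do not expect a serious obstacle, because all the substantive work (generation, the trace computations and the linear inequivalence) has already been done. The only point that needs care is the direction of the functoriality argument: Nielsen equivalence pushes forward under homomorphisms, so inequivalence in the quotient implies inequivalence upstairs. One should also note that the $\SOut$ pairs are compared inside $\SOut(F_n)$ itself, not inside $\Out(F_n)$; this is consistent because $\rho$ maps $\SOut(F_n)$ into $\SL(n,\Z)$.
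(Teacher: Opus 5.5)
Your proposal is correct and follows the paper's proof essentially step for step. It gets generation and orders from \cref{cor:generation-out-sout}, proves pairwise inequivalence by pushing a hypothetical Nielsen equivalence forward along $\rho$ to contradict \cref{thm:linear-nielsen}, uses countability of the ambient group, and uses the noncyclic linear quotients to get $d=2$. The only slip is your aside in part~(c): \cref{thm:special-two-generated} covers only $\SAut(F_n)$ and $\SOut(F_n)$, so it could not stand in for your argument for $\Out(F_n)$; your main argument does not rely on it.
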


\begin{proof}
For part~\textup{(a)}, \cref{cor:generation-out-sout}(b),(c) shows that every
pair $(a_m,b_m)$ generates $\Out(F_n)$ and has the stated orders.  The
equality $a_m=a_\ell$ follows immediately from
\eqref{eq:outer-families-short}.  By
\cref{thm:linear-nielsen}(a), the image pairs
$(\rho(a_m),\rho(b_m))=(Z_-,H_m)$ represent pairwise distinct Nielsen
equivalence classes in $\GL(n,\Z)$.  A group homomorphism sends Nielsen
equivalent tuples to Nielsen equivalent tuples.  Therefore two distinct pairs
$(a_m,b_m)$ and $(a_\ell,b_\ell)$ cannot be Nielsen equivalent in
$\Out(F_n)$, since otherwise their homology images would be Nielsen equivalent
in $\GL(n,\Z)$.  Thus the displayed family gives infinitely many distinct
Nielsen classes in $\Out(F_n)$.  Since $\Out(F_n)$ is countable, the total
number of Nielsen equivalence classes of generating pairs is countably
infinite.

For part~\textup{(b)}, \cref{cor:generation-out-sout}(a),(c) gives the stated
generation and order assertions for $(a_m^+,b_m^+)$ in $\SOut(F_n)$.  The
equality $a_m^+=a_\ell^+$ follows immediately from
\eqref{eq:outer-families-short}.  By
\cref{thm:linear-nielsen}(b), their images
$(\rho(a_m^+),\rho(b_m^+))=(Z_+,H_m)$ represent pairwise distinct Nielsen
equivalence classes in $\SL(n,\Z)$.  Hence the same homomorphism argument
shows that the pairs $(a_m^+,b_m^+)$ represent pairwise distinct Nielsen
classes in $\SOut(F_n)$.  Since $\SOut(F_n)$ is countable, it has countably
infinitely many Nielsen equivalence classes of generating pairs.

Finally, for part~\textup{(c)}, the displayed generating pairs show that each
of $\Out(F_n)$ and $\SOut(F_n)$ can be generated by two elements.  On the
other hand, the surjective homology maps from
\cref{conv:free-group-notation} have the noncyclic images $\GL(n,\Z)$ and
$\SL(n,\Z)$, respectively.  Hence neither outer automorphism group is cyclic,
and the minimal number of generators of each is exactly $2$.
\end{proof}

\section{Stabilization and weakly redundant Nielsen equivalence}\label{sec:stabilization}

The first coordinates of the two families in \cref{thm:main} do not depend on
$m$.  The common-coordinate lemma therefore gives the following statement,
which proves Theorem~C.

\begin{cor}\label{cor:families-stabilize}
Let $n\ge 8$ and $m,\ell\ge1$.  The following statements hold.
\begin{enumerate}[label=\textup{(\alph*)}]
\item
\begin{equation}
 (\overline{\zeta}_-,\overline{\omega}_m,1)
 \sim_N
 (\overline{\zeta}_-,\overline{\omega}_\ell,1)
 \quad\text{as triples in }\Out(F_n);
\label{eq:out-stabilizes-short}
\end{equation}
\item
\begin{equation}
 (\overline{\zeta}_+,\overline{\omega}_m,1)
 \sim_N
 (\overline{\zeta}_+,\overline{\omega}_\ell,1)
 \quad\text{as triples in }\SOut(F_n).
\label{eq:sout-stabilizes-short}
\end{equation}
\end{enumerate}
\end{cor}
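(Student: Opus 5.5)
This is a direct application of \cref{prop:common-coordinate-stabilization} with $n=2$. There the common coordinate is a single element $c_1$, and the two varying last coordinates are $u$ and $v$.

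For part~(a), we work in $G=\Out(F_n)$ and take
\[
 c_1=\overline{\zeta}_-,\qquad u=\overline{\omega}_m,\qquad v=\overline{\omega}_\ell.
\]
The hypothesis \eqref{eq:common-generation-short} of the proposition requires
\[
 \Out(F_n)=\langle\overline{\zeta}_-,\overline{\omega}_m\rangle=\langle\overline{\zeta}_-,\overline{\omega}_\ell\rangle.
\]
This is exactly \cref{cor:generation-out-sout}(b), applied once with the parameter $m$ and once with $\ell$. The proposition then yields
\[
 (\overline{\zeta}_-,\overline{\omega}_m,1)\sim_N(\overline{\zeta}_-,\overline{\omega}_\ell,1),
\]
which is \eqref{eq:out-stabilizes-short}.

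For part~(b), we argue the same way in $G=\SOut(F_n)$, with $c_1=\overline{\zeta}_+$. The generation hypothesis is now supplied by \cref{cor:generation-out-sout}(a) for both $m$ and $\ell$.

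The one point worth checking is that the Nielsen equivalence in (b) really takes place inside $\SOut(F_n)$, and not merely inside $\Out(F_n)$. The proof of \cref{prop:common-coordinate-stabilization} only applies Nielsen moves that multiply by words in the current coordinates. Every such coordinate lies in $\SOut(F_n)$, so all intermediate triples stay in $\SOut(F_n)^3$, and the equivalence is valid there.

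There is no real obstacle here. All the substance lies in the generation results already established, and in the observation that the first coordinate does not depend on $m$.
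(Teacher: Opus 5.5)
Your proposal is correct and is exactly the paper's proof: the paper applies \cref{prop:common-coordinate-stabilization} with the fixed first coordinate $\overline{\zeta}_-$ or $\overline{\zeta}_+$, using \cref{cor:generation-out-sout} for the generation hypothesis. Your extra check that the moves stay inside $\SOut(F_n)$ is harmless. It is also automatic, because the proposition is applied with $G=\SOut(F_n)$ itself.
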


\begin{proof}
Apply \cref{prop:common-coordinate-stabilization} with the fixed first
coordinate $\overline{\zeta}_-$ or $\overline{\zeta}_+$ and use
\cref{cor:generation-out-sout}.
\end{proof}

The next elementary observation shows why one stabilization is the only stable
question for generating pairs.

\begin{prop}\label{prop:two-stabilizations}
Let $G$ be a group and let $(a,b)$ and $(c,d)$ be ordered generating pairs of
$G$.
Then
\begin{equation}
 (a,b,1,1)\sim_N(c,d,1,1).
\label{eq:two-stabilizations-short}
\end{equation}
\end{prop}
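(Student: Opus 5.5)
The idea is to pass through the intermediate quadruple $(a,b,c,d)$, using the two trivial coordinates as workspace. Since any two generating pairs can be joined to this common quadruple, the claim follows by transitivity.

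First, start from $(a,b,1,1)$. Because $\langle a,b\rangle=G$, there is a word $W$ with $W(a,b)=c$. Applying to the third coordinate the right multiplications $R_{3,i}^{\eps}$ corresponding to the letters of $W$ turns that coordinate into $c$ and fixes the others, exactly as in the proof of \cref{prop:common-coordinate-stabilization}. The same step on the fourth coordinate, using a word $W'$ with $W'(a,b)=d$, produces $d$. This gives
\[
 (a,b,1,1)\sim_N(a,b,c,d).
\]

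Next, since $\langle c,d\rangle=G$, there are words $V,V'$ with $V(c,d)=a^{-1}$ and $V'(c,d)=b^{-1}$. Applying to the first coordinate the right multiplications by entries in positions $3,4$ dictated by $V$ turns $a$ into $a\cdot a^{-1}=1$ and leaves the other coordinates unchanged. Doing the same to the second coordinate with $V'$ yields
\[
 (a,b,c,d)\sim_N(1,1,c,d).
\]
Finally, the permutations $P_{13}$ and $P_{24}$ move this to $(c,d,1,1)$, and \cref{prop:basic-nielsen-facts}(a) combines the chain into \eqref{eq:two-stabilizations-short}.

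There is no genuine obstacle. The only point needing care is that while a coordinate is being modified, every entry multiplied onto it sits in a different, currently fixed position. This holds here: we change position $3$ or $4$ using positions $1,2$, and position $1$ or $2$ using positions $3,4$. The generation hypotheses on both pairs are used precisely to guarantee that the words $W,W',V,V'$ exist.
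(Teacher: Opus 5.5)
Your argument is correct and is essentially the paper's proof: both use the two trivial coordinates as workspace to reach the intermediate quadruple $(a,b,c,d)$, and then clear the old pair using words in $c,d$. The only difference is cosmetic: the paper permutes to $(c,d,a,b)$ before clearing the last two coordinates, whereas you clear the first two coordinates and then permute.
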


\begin{proof}
Choose words $C,D\in F(y_1,y_2)$ satisfying
\[
 C(a,b)=c,
 \qquad
 D(a,b)=d.
\]
Using the transformations $R_{3,j}^{\eps}$ associated to the letters of $C$
and the transformations $R_{4,j}^{\eps}$ associated to the letters of $D$,
change $(a,b,1,1)$ to $(a,b,c,d)$.  Permute the coordinates to obtain
$(c,d,a,b)$.  Choose $A,B\in F(y_1,y_2)$ with
\[
 A(c,d)=a^{-1},
 \qquad
 B(c,d)=b^{-1}.
\]
Apply the corresponding elementary Nielsen transformations to the third and
fourth coordinates.  They become $1$, while the first two coordinates remain
$c,d$.  The resulting tuple is $(c,d,1,1)$.
\end{proof}

For a group $G$, let $N_3^{\rm wr}(G)$ be the weakly redundant Nielsen
subgraph from \cref{def:nielsen-graph}.  The following question is therefore the
natural stable counterpart of the rank-two Nielsen problem:
\begin{equation}
 \text{is }N_3^{\rm wr}(\Out(F_n))\text{ connected?}
\label{eq:redundant-connectivity-question}
\end{equation}
The same question applies to $\SOut(F_n)$.  A positive answer would imply that
all generating pairs become Nielsen equivalent after one stabilization,
whereas a negative answer would produce a genuinely stable Nielsen invariant.

\section{Relation modules and one-stabilization obstructions}\label{sec:relation-modules}

Let $G$ be a group, put $\Lambda=\Z G$, and let
$I_G=\ker(\varepsilon:\Lambda\to\Z)$ be the augmentation ideal.  We use left
$\Lambda$-modules.

\begin{defn}\label{def:relation-module}
Let $G$ be a group, let $n\ge 1$, and let
$\mathbf g=(g_1,\ldots,g_n)$ be an ordered generating $n$-tuple of $G$,
and let
\[
 R_{\mathbf g}=\ker(\theta_{\mathbf g}:F_n\twoheadrightarrow G).
\]
The \emph{relation module associated to $\mathbf g$} is the abelian group
\begin{equation}
 M_{\mathbf g}=R_{\mathbf g}/[R_{\mathbf g},R_{\mathbf g}],
\label{eq:relation-module-short}
\end{equation}
equipped with the action specified by
\[
 g\cdot[r]=[\widetilde g r\widetilde g^{-1}],
\]
where $\widetilde g\in F_n$ is a lift of $g\in G$.  The well-definedness of
this action is recorded in \cref{prop:relation-module-properties}(a).
\end{defn}

Relation modules associated to free presentations are classical; see, for
example, Evans \cite{EvansRelation}.  The following proposition collects the
properties relevant to stabilization.

\begin{prop}\label{prop:relation-module-properties}
Let $G$ be a group and $\Lambda=\Z G$.
\begin{enumerate}[label=\textup{(\alph*)}]
\item Let $n\ge1$, and let $\mathbf g$ be an ordered generating $n$-tuple of $G$.
The formula in \cref{def:relation-module} defines a well-defined left
$G$-action on $M_{\mathbf g}$, and hence a left $\Lambda$-module structure.
\item Let $n\ge1$.  For every ordered generating $n$-tuple
$\mathbf g=(g_1,\ldots,g_n)$, there is a natural exact sequence of left
$\Lambda$-modules
\begin{equation}
 0\longrightarrow M_{\mathbf g}
 \longrightarrow\Lambda^n
 \xrightarrow{\partial_{\mathbf g}}I_G
 \longrightarrow0,
\qquad
 \partial_{\mathbf g}(e_i)=g_i-1.
\label{eq:relation-sequence-short}
\end{equation}
\item Let $n\ge1$, and let $\mathbf g$ and $\mathbf h$ be ordered generating
$n$-tuples of $G$.  If $\mathbf g\sim_N\mathbf h$, then
\begin{equation}
 M_{\mathbf g}\cong M_{\mathbf h}
\label{eq:relation-nielsen-short}
\end{equation}
as $\Lambda$-modules.
\item Let $n\ge1$, and let $\mathbf g$ be an ordered generating $n$-tuple of
$G$.  Then there is an isomorphism of $\Lambda$-modules
\begin{equation}
 M_{\operatorname{st}(\mathbf g)}
 \cong M_{\mathbf g}\oplus\Lambda.
\label{eq:relation-stabilization-short}
\end{equation}
\item If $\mathbf g$ and $\mathbf h$ are ordered generating pairs of $G$, then
there is an isomorphism of $\Lambda$-modules
\begin{equation}
 M_{\mathbf g}\oplus\Lambda^2
 \cong M_{\mathbf h}\oplus\Lambda^2.
\label{eq:relation-two-free-short}
\end{equation}
\end{enumerate}
\end{prop}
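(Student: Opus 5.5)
I will prove the five parts in order. Parts (a) and (b) come from the Fox-calculus/covering-space description of relation modules, and parts (c)--(e) are then formal consequences of (b) together with earlier results.

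For (a), put $R=R_{\mathbf g}$. Conjugation by $F_n$ preserves $R$ and $[R,R]$, so $F_n$ acts on $M_{\mathbf g}$. Inner conjugation by $r\in R$ acts trivially on $R/[R,R]$. Hence the action factors through $F_n/R\cong G$, which gives a left $G$-action and so a $\Lambda$-module structure.

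For (b), I use the Magnus--Fox embedding. Let $\widetilde X$ be the Cayley graph of $G$ with respect to $\mathbf g$, that is, the cover of the rose with $n$ petals corresponding to $R$. Then $H_1(\widetilde X)\cong R^{\mathrm{ab}}=M_{\mathbf g}$, and this identification is $G$-equivariant for the deck action. The cellular chain complex of $\widetilde X$ is
\[
 0\to H_1(\widetilde X)\to C_1=\Lambda^n\xrightarrow{\ \partial\ }C_0=\Lambda\xrightarrow{\ \varepsilon\ }\Z\to0.
\]
This sequence is exact because $\widetilde X$ is connected and one-dimensional. The boundary is $\partial e_i=g_i-1$, so $\operatorname{im}\partial=I_G$. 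Naturality is clear from the construction. Equivalently, $[r]\mapsto\sum_i\overline{\partial r/\partial x_i}\,e_i$ via Fox derivatives; I will cite the standard reference rather than verify exactness by hand.

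For (c), \cref{prop:basic-nielsen-facts}(c) gives $\theta_{\mathbf h}=\theta_{\mathbf g}\circ\Phi$ with $\Phi\in\Aut(F_n)$. Then $\Phi$ maps $R_{\mathbf h}$ onto $R_{\mathbf g}$. The induced map $M_{\mathbf h}\to M_{\mathbf g}$ is an isomorphism of abelian groups. It is $G$-equivariant because $\Phi(\widetilde g)$ is a lift of the same element $g$ under $\theta_{\mathbf g}$.

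For (d), I apply (b) to $\operatorname{st}(\mathbf g)$. The last basis vector $e_{n+1}$ has $\partial e_{n+1}=1-1=0$. So $\partial_{\operatorname{st}(\mathbf g)}=\partial_{\mathbf g}\oplus0$ on $\Lambda^n\oplus\Lambda$, and its kernel is $M_{\mathbf g}\oplus\Lambda$. Identifying this kernel with $M_{\operatorname{st}(\mathbf g)}$ uses the naturality of (b), which is the point needing care. Alternatively, $R_{\operatorname{st}(\mathbf g)}$ is the free product of $R_{\mathbf g}$ with the normal closure of $x_{n+1}$. The latter has abelianization $\Lambda$ generated by the class of $x_{n+1}$, since this normal closure is freely generated by the conjugates of $x_{n+1}$ by coset representatives, which one can take from $F_n$.

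For (e), I combine \cref{prop:two-stabilizations} with (c) and then apply (d) twice:
\[
 M_{\mathbf g}\oplus\Lambda^2\cong M_{\operatorname{st}_2(\mathbf g)}\cong M_{\operatorname{st}_2(\mathbf h)}\cong M_{\mathbf h}\oplus\Lambda^2.
\]
A Schanuel's-lemma argument applied to (b) would also work, giving $M_{\mathbf g}\oplus\Lambda^2\cong M_{\mathbf h}\oplus\Lambda^2$ directly.

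The main obstacle is (b), specifically the $G$-equivariant identification of $R^{\mathrm{ab}}$ with the kernel of $\partial_{\mathbf g}$. Everything else reduces to it.
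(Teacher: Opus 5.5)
Your proposal is correct. Parts (a)--(d) follow the paper's proof: lift-independence of the conjugation action, the Cayley graph $\Gamma_{\mathbf g}$ as the cover of the rose with $H_1(\Gamma_{\mathbf g})\cong M_{\mathbf g}$ and $\partial_{\mathbf g}(e_i)=g_i-1$, transport of relation subgroups by $\Phi$, and $\partial_{\operatorname{st}(\mathbf g)}=\partial_{\mathbf g}\oplus0$. In (d) no extra naturality is needed: applying (b) to the tuple $\operatorname{st}(\mathbf g)$ itself identifies $M_{\operatorname{st}(\mathbf g)}$ with $\ker\partial_{\operatorname{st}(\mathbf g)}$.

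The genuine difference is in (e). The paper uses the pullback form of Schanuel's lemma, which you mention only in passing: it splits $P=\{(x,y)\in\Lambda^2\oplus\Lambda^2\mid\partial_{\mathbf g}(x)=\partial_{\mathbf h}(y)\}$ in two ways. Your main route, \cref{prop:two-stabilizations} followed by (c) and two applications of (d), is also valid. It is not circular, since \cref{prop:two-stabilizations} is proved by Nielsen moves alone. Your route gives a more concrete conclusion: the isomorphism in (e) is induced by an automorphism of $F_4$. It is therefore literally the module-level shadow of the Nielsen equivalence $\operatorname{st}_2(\mathbf g)\sim_N\operatorname{st}_2(\mathbf h)$, which makes the paper's later remark that (d) and (e) mirror one versus two stabilizations exact. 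The Schanuel route uses nothing beyond the exact sequence (b) and needs no Nielsen theory. It also applies verbatim to any two epimorphisms from free $\Lambda$-modules onto $I_G$, giving for instance $M_{\mathbf g}\oplus\Lambda^m\cong M_{\mathbf h}\oplus\Lambda^n$ for generating tuples of lengths $n$ and $m$ at once.

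One correction to your side remark in (d). $R_{\operatorname{st}(\mathbf g)}$ is not the free product of $R_{\mathbf g}$ with the normal closure $N$ of $x_{n+1}$ in $F_{n+1}$. It is the semidirect product $N\rtimes R_{\mathbf g}$. Moreover, $N$ is free on all conjugates $wx_{n+1}w^{-1}$ with $w\in F_n$, so $N_{\rm ab}$ is free abelian on $F_n$, not $\Lambda$. The correct free complement of $R_{\mathbf g}$ is the subgroup freely generated by the elements $tx_{n+1}t^{-1}$, with $t$ ranging over a transversal of $R_{\mathbf g}$ in $F_n$; these correspond to the loops attached at the vertices of $\Gamma_{\mathbf g}$. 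Conjugation by elements of $R_{\mathbf g}$ is trivial on the abelianization, so $G$ permutes the classes of these elements simply transitively, and the alternative then works. Your main argument for (d) does not rely on this remark, so correctness is unaffected.
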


\begin{proof}
For part~\textup{(a)}, if $\widetilde g$ and $\widetilde g'$ are two lifts of
$g\in G$, then $\widetilde g'=s\widetilde g$ for some $s\in R_{\mathbf g}$.
Conjugation by $s$ acts trivially on the abelianization of $R_{\mathbf g}$, so
$[\widetilde g'r(\widetilde g')^{-1}]=[\widetilde g r\widetilde g^{-1}]$.
Thus the action is independent of the chosen lift.  If lifts of $g$ and $h$
are chosen, their product is a lift of $gh$, which gives the action law; the
identity acts trivially.  Extending $\Z$-linearly yields the stated
$\Lambda$-module structure.

For part~\textup{(b)}, let $\Gamma_{\mathbf g}$ be the Cayley graph whose
vertices are the elements of $G$ and which has, for every $x\in G$ and every
$i$, an oriented edge from $x$ to $xg_i$.  The group $G$ acts on this graph by
left multiplication.  The cellular chain modules of $\Gamma_{\mathbf g}$ are
\[
 C_1(\Gamma_{\mathbf g};\Z)\cong\Lambda^n,
 \qquad
 C_0(\Gamma_{\mathbf g};\Z)\cong\Lambda,
\]
and, with the base edge from $1$ to $g_i$ corresponding to $e_i$, the cellular
boundary satisfies $\partial_{\mathbf g}(e_i)=g_i-1$.  The graph is the
quotient by $R_{\mathbf g}$ of the Cayley tree of $F_n$, so
$H_1(\Gamma_{\mathbf g};\Z)\cong M_{\mathbf g}$.  Since $\mathbf g$
generates $G$, the image of the boundary is $I_G$, proving
\eqref{eq:relation-sequence-short}.

For part~\textup{(c)}, if
$\theta_{\mathbf h}=\theta_{\mathbf g}\circ\Phi$ with $\Phi\in\Aut(F_n)$, then
$\Phi$ maps $R_{\mathbf h}$ isomorphically to $R_{\mathbf g}$ and induces a
$G$-equivariant isomorphism of their abelianizations.

For part~\textup{(d)}, the boundary map for
$\operatorname{st}(\mathbf g)=(g_1,\ldots,g_n,1)$ is
\[
 \partial_{\operatorname{st}(\mathbf g)}
 =\partial_{\mathbf g}\oplus0:
 \Lambda^n\oplus\Lambda\longrightarrow I_G.
\]
Taking kernels gives \eqref{eq:relation-stabilization-short}.

For part~\textup{(e)}, let
\[
 P=\{(x,y)\in\Lambda^2\oplus\Lambda^2
 \mid \partial_{\mathbf g}(x)=\partial_{\mathbf h}(y)\}.
\]
Projection of $P$ onto the first copy of $\Lambda^2$ gives a short exact
sequence
\[
 0\longrightarrow M_{\mathbf h}\longrightarrow P
 \longrightarrow\Lambda^2\longrightarrow0.
\]
The sequence splits because $\Lambda^2$ is free, so
$P\cong M_{\mathbf h}\oplus\Lambda^2$.  Projection onto the second copy gives
similarly $P\cong M_{\mathbf g}\oplus\Lambda^2$.  Comparing the two
decompositions yields \eqref{eq:relation-two-free-short}; this is the usual
pullback proof of Schanuel's lemma.
\end{proof}

\begin{cor}\label{cor:relation-module-obstruction}
Let $G$ be a group, let $n\ge1$, and let $\mathbf g,\mathbf h$ be ordered
generating $n$-tuples of $G$.  If, as $\Z G$-modules,
\begin{equation}
 M_{\mathbf g}\oplus\Z G
 \not\cong
 M_{\mathbf h}\oplus\Z G,
\label{eq:relation-obstruction-short}
\end{equation}
then
\begin{equation}
 \operatorname{st}(\mathbf g)\not\sim_N\operatorname{st}(\mathbf h).
\label{eq:stable-obstruction-short}
\end{equation}
\end{cor}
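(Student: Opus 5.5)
The plan is to prove the contrapositive by combining parts (c) and (d) of \cref{prop:relation-module-properties}. Suppose, contrary to \eqref{eq:stable-obstruction-short}, that $\operatorname{st}(\mathbf g)\sim_N\operatorname{st}(\mathbf h)$ as $(n+1)$-tuples in $G$. First I will check that these stabilized tuples are again ordered generating $(n+1)$-tuples of $G$. This holds because adjoining the trivial coordinate does not change the generated subgroup, so $\langle\operatorname{st}(\mathbf g)\rangle=\langle\mathbf g\rangle=G$, and likewise for $\mathbf h$. Thus their relation modules $M_{\operatorname{st}(\mathbf g)}$ and $M_{\operatorname{st}(\mathbf h)}$ are defined as in \cref{def:relation-module}, with $n+1$ in place of $n$.

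Next I will apply \cref{prop:relation-module-properties}(c) with $n+1$ in place of $n$. The assumed Nielsen equivalence then gives an isomorphism of $\Z G$-modules
\[
 M_{\operatorname{st}(\mathbf g)}\cong M_{\operatorname{st}(\mathbf h)}.
\]
Then I will apply \cref{prop:relation-module-properties}(d) to each side. This identifies the left side with $M_{\mathbf g}\oplus\Z G$ and the right side with $M_{\mathbf h}\oplus\Z G$. Composing these isomorphisms yields $M_{\mathbf g}\oplus\Z G\cong M_{\mathbf h}\oplus\Z G$, which contradicts \eqref{eq:relation-obstruction-short}.

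There is no real obstacle. The only point requiring care is bookkeeping: part (c) must be invoked for tuples of length $n+1$, and part (d) requires that the stabilization be taken in the last coordinate, exactly as in \eqref{eq:stabilization-short}. Both conditions are satisfied here.
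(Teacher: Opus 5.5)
Your proposal is correct and is the paper's argument: assuming $\operatorname{st}(\mathbf g)\sim_N\operatorname{st}(\mathbf h)$, parts~(c) and~(d) of \cref{prop:relation-module-properties} give $M_{\mathbf g}\oplus\Z G\cong M_{\mathbf h}\oplus\Z G$, a contradiction. Your added check that the stabilized $(n+1)$-tuples still generate $G$, so that part~(c) applies, is appropriate bookkeeping that the paper leaves implicit.
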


\begin{proof}
If the stabilized tuples were Nielsen equivalent, parts~(c) and~(d) of
\cref{prop:relation-module-properties} would give an isomorphism contradicting
\eqref{eq:relation-obstruction-short}.
\end{proof}

Equations \eqref{eq:relation-stabilization-short} and
\eqref{eq:relation-two-free-short} mirror the distinction between one and two
stabilizations: one free summand may retain a cancellation obstruction, whereas
two never distinguish relation modules of generating pairs.

\subsection{Examples and limitations}

For $j\ge0$, we use the notation $\operatorname{st}_j$ from
\cref{def:nielsen-equivalence} for $j$-fold stabilization.  Evans's
non-cancellation theorem gives genuine instances of
\cref{cor:relation-module-obstruction}.

\begin{cor}[Evans's non-cancellation examples]\label{cor:evans-stable-nielsen}
For every integer $j\ge1$, there exist a finitely generated group $G$, an
integer $m\ge1$, and ordered generating $m$-tuples $\mathbf g,\mathbf h$ of
$G$ such that
\begin{equation}
 M_{\mathbf g}\oplus(\Z G)^j
 \not\cong
 M_{\mathbf h}\oplus(\Z G)^j.
\label{eq:evans-noncancellation}
\end{equation}
Consequently,
\begin{equation}
 \operatorname{st}_j(\mathbf g)
 \not\sim_N
 \operatorname{st}_j(\mathbf h).
\label{eq:evans-stable-inequivalence}
\end{equation}
\end{cor}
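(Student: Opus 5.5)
The corollary splits into two parts: the non-cancellation isomorphism statement \eqref{eq:evans-noncancellation}, which is essentially Evans's theorem \cite{EvansRelationII} and will be imported, and the Nielsen consequence \eqref{eq:evans-stable-inequivalence}, which follows formally from \cref{prop:relation-module-properties}.

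\emph{Importing Evans's theorem.} For each $j\ge1$, Evans \cite{EvansRelationII} constructs a finitely generated group $G$ together with two generating $m$-tuples, or equivalently two epimorphisms $F_m\twoheadrightarrow G$, whose relation modules satisfy $M_{\mathbf g}\oplus(\Z G)^j\not\cong M_{\mathbf h}\oplus(\Z G)^j$. His groups are typically metabelian or built from suitable group rings admitting non-free stably free modules. The work here is bookkeeping rather than new mathematics.
\begin{enumerate}
\item I will check that Evans's relation modules are the modules $R/[R,R]$ with the conjugation action of \cref{def:relation-module}. Equivalently, they are the kernels of $\Lambda^m\to I_G$ as in \cref{prop:relation-module-properties}(b).
\item I will check that both presentations use the same number $m$ of generators, so that $\mathbf g$ and $\mathbf h$ are tuples of the same length.
\item If Evans states the result with different generator counts or with unordered generating sets, I will first pad the shorter tuple with trivial entries. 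By \cref{prop:relation-module-properties}(d), each such padding adds a free summand $\Lambda$, and this can be absorbed by adjusting the exponent $j$ in the cited statement.
\end{enumerate}
This matching of Evans's formulation to ours is the main obstacle, and it is entirely bibliographic.

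\emph{Deriving the Nielsen consequence.} Suppose, for contradiction, that $\operatorname{st}_j(\mathbf g)\sim_N\operatorname{st}_j(\mathbf h)$.

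First, iterating \cref{prop:relation-module-properties}(d) $j$ times gives
\[
M_{\operatorname{st}_j(\mathbf g)}\cong M_{\mathbf g}\oplus(\Z G)^j,
\]
and similarly for $\mathbf h$. Each iteration applies part~(d) to a generating tuple, since a generating tuple remains generating after stabilization.

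Second, \cref{prop:relation-module-properties}(c), applied to the Nielsen-equivalent tuples $\operatorname{st}_j(\mathbf g)$ and $\operatorname{st}_j(\mathbf h)$, gives
\[
M_{\operatorname{st}_j(\mathbf g)}\cong M_{\operatorname{st}_j(\mathbf h)}.
\]
Combining the two steps yields $M_{\mathbf g}\oplus(\Z G)^j\cong M_{\mathbf h}\oplus(\Z G)^j$, which contradicts \eqref{eq:evans-noncancellation}. Hence $\operatorname{st}_j(\mathbf g)\not\sim_N\operatorname{st}_j(\mathbf h)$. For $j=1$ this is exactly \cref{cor:relation-module-obstruction}.
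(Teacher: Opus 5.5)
Your proposal is correct and follows essentially the same route as the paper: import Evans's theorem, which gives two presentations of $G$ on the same number $m$ of generators. Then iterate \cref{prop:relation-module-properties}(d) to get $M_{\operatorname{st}_j(\mathbf g)}\cong M_{\mathbf g}\oplus(\Z G)^j$, and conclude using part~(c). The one bookkeeping point the paper actually needs is that Evans works with right modules, which the involution $g\mapsto g^{-1}$ converts to the left-module convention; the padding contingency in your item~3 never arises.
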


\begin{proof}
Evans \cite{EvansRelationII} proves that for every $j\ge1$ there are a group
$G$ and two presentations of $G$ on the same number $m$ of generators whose
relation modules $R$ and $S$ satisfy
\[
 R\oplus(\Z G)^j\not\cong S\oplus(\Z G)^j.
\]
Evans uses right modules; applying the standard involution
$\Z G\to\Z G$, $g\mapsto g^{-1}$, translates his statement to our left-module
convention.  Let $\mathbf g$ and $\mathbf h$ be the ordered generating
$m$-tuples obtained as the images of the two free bases.  Their relation
modules are $R$ and $S$.  Iterating
\cref{prop:relation-module-properties}(d) gives
\[
 M_{\operatorname{st}_j(\mathbf g)}
 \cong M_{\mathbf g}\oplus(\Z G)^j,
 \qquad
 M_{\operatorname{st}_j(\mathbf h)}
 \cong M_{\mathbf h}\oplus(\Z G)^j.
\]
Thus \eqref{eq:evans-noncancellation} and
\cref{prop:relation-module-properties}(c) imply
\eqref{eq:evans-stable-inequivalence}.
\end{proof}

\begin{rem}\label{rem:evans-concrete}
The case $j=1$ of Theorem~1.1 in \cite{EvansRelationII} already gives a
concrete one-stabilization obstruction: it produces a finitely generated group
$G$ with two finite presentations on the same number of generators whose
relation modules $R,S$ satisfy
\[
 R\oplus\Z G\not\cong S\oplus\Z G.
\]
Thus the associated generating tuples remain Nielsen inequivalent after one
stabilization.  These examples are far from the rank-two examples considered
next.
\end{rem}

The next example shows the opposite phenomenon: relation modules distinguish
generating pairs before stabilization, but the distinction disappears after
one stabilization.

\begin{rem}[The Baumslag--Solitar group $BS(2,3)$]\label{rem:BS23}
Let
\[
 G=BS(2,3)=\langle x,y\mid xy^2x^{-1}=y^3\rangle
\]
and put $z=y^4$.  Harlander and Jensen \cite{HarlanderJensen} show that
$(x,z)$ generates $G$ and that the relation module $M_{(x,z)}$ is stably free
but nonfree of rank one:
\begin{equation}
 M_{(x,z)}\oplus\Z G\cong(\Z G)^2,
 \qquad
 M_{(x,z)}\not\cong\Z G.
\label{eq:BS-exotic-module}
\end{equation}
The standard one-relator presentation is aspherical (see, for example, \cite{HarlanderJensen} and the references there), so
$M_{(x,y)}\cong\Z G$.  Thus \cref{prop:relation-module-properties}(c) distinguishes the generating
pairs $(x,y)$ and $(x,z)$.  Since they share the first coordinate and both
generate $G$, \cref{prop:common-coordinate-stabilization} gives
\begin{equation}
 (x,y,1)\sim_N(x,z,1).
\label{eq:BS-stabilized}
\end{equation}
Accordingly, their relation modules become isomorphic after adjoining one free
summand.  Thus unstabilized non-isomorphism alone does not obstruct one
stabilization.
\end{rem}

Genuine rank-two stable inequivalence nevertheless occurs.  Kapovich and
Weidmann \cite{KWsmall} consider generic small-cancellation presentations of
the form
\begin{equation}
 G=\left\langle a_1,a_2,b_1,b_2\ \middle|\
 a_i=u_i(b_1,b_2),\ b_i=v_i(a_1,a_2),\ i=1,2\right\rangle.
\label{eq:KW-mutual-substitution}
\end{equation}
Both $\mathbf a=(a_1,a_2)$ and $\mathbf b=(b_1,b_2)$ generate $G$, and for
generic choices of the words $u_i,v_i$ they prove
\begin{equation}
 \operatorname{st}(\mathbf a)\not\sim_N
 \operatorname{st}(\mathbf b).
\label{eq:KW-one-stable}
\end{equation}
We do not know whether \cref{cor:relation-module-obstruction} detects these
rank-two examples; they are a natural test case for both cancellation and the
quotient refinement in the following subsection.

\subsection{A quotient relation-module refinement for future applications}

The preceding examples show that the direct obstruction is nontrivial but may
cancel even for a natural two-generated group.  For future applications,
especially to rank-two examples such as \eqref{eq:KW-mutual-substitution}, we
record a quotient-level refinement over a chosen quotient of $G$.

Let
\begin{equation}
 \pi:G\twoheadrightarrow Q,
 \qquad K=\ker\pi,
\label{eq:quotient-setup-short}
\end{equation}
let $n\ge1$, and let $\mathbf g=(g_1,\ldots,g_n)$ be an ordered
generating $n$-tuple of $G$.  Write
\[
 \overline{\mathbf g}
 =(\pi(g_1),\ldots,\pi(g_n)).
\]
If
\[
 R=\ker \theta_{\mathbf g},
 \qquad
 S=\ker \theta_{\overline{\mathbf g}},
\]
then restriction of $\theta_{\mathbf g}$ gives
\begin{equation}
 1\longrightarrow R\longrightarrow S\longrightarrow K\longrightarrow1.
\label{eq:R-S-K-short}
\end{equation}
Conjugation by $G$ on both $K_{\rm ab}$ and $H_2(K;\Z)$ factors through
$Q$, since inner automorphisms of $K$ act trivially on these groups.  Thus both
are naturally left $\Z Q$-modules.  The map $S\to K$ induces a natural
$\Z Q$-module epimorphism
\begin{equation}
 \theta_{\pi,\mathbf g}:
 M_{\overline{\mathbf g}}=S/[S,S]
 \longrightarrow K_{\rm ab}.
\label{eq:theta-short}
\end{equation}
For a relation $w(\overline{\mathbf g})=1$ in $Q$,
\begin{equation}
 \theta_{\pi,\mathbf g}([w])
 =[w(\mathbf g)]\in K_{\rm ab}.
\label{eq:theta-evaluation-short}
\end{equation}
Put
\begin{equation}
 C_{\pi,\mathbf g}=\ker\theta_{\pi,\mathbf g}.
\label{eq:C-short}
\end{equation}

We regard $\Z Q$ as a $(\Z Q,\Z G)$-bimodule via $\pi$, so that
$q\cdot g=q\pi(g)$ for $q\in Q$ and $g\in G$.  The standard
five-term homology sequence gives the following refinement; see Brown
\cite{BrownCohomology}.

\begin{prop}\label{prop:refined-relation-obstruction}
Let $\pi:G\twoheadrightarrow Q$ have kernel $K$, let $n\ge1$, and let
$\mathbf g$ be an ordered generating $n$-tuple of $G$.
\begin{enumerate}[label=\textup{(\alph*)}]
\item There is a natural exact sequence of $\Z Q$-modules
\begin{equation}
 0\longrightarrow H_2(K;\Z)
 \longrightarrow
 \Z Q\otimes_{\Z G}M_{\mathbf g}
 \longrightarrow
 M_{\overline{\mathbf g}}
 \xrightarrow{\theta_{\pi,\mathbf g}}
 K_{\rm ab}
 \longrightarrow0.
\label{eq:refined-five-term-short}
\end{equation}
\item Stabilization gives an isomorphism of $\Z Q$-modules
\begin{equation}
 C_{\pi,\operatorname{st}(\mathbf g)}
 \cong C_{\pi,\mathbf g}\oplus\Z Q.
\label{eq:C-stabilization-short}
\end{equation}
\item If $\mathbf g,\mathbf h$ are ordered generating $n$-tuples of $G$
and, as $\Z Q$-modules,
\begin{equation}
 C_{\pi,\mathbf g}\oplus\Z Q
 \not\cong
 C_{\pi,\mathbf h}\oplus\Z Q,
\label{eq:refined-obstruction-short}
\end{equation}
then
\begin{equation}
 \operatorname{st}(\mathbf g)\not\sim_N\operatorname{st}(\mathbf h).
\label{eq:refined-conclusion-short}
\end{equation}
\end{enumerate}
\end{prop}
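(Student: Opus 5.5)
For part (a), I would use the five-term exact sequence in group homology for the extension \eqref{eq:R-S-K-short} of groups $1\to R\to S\to K\to1$:
\[
 H_2(S;\Z)\to H_2(K;\Z)\to H_0(K;H_1(R;\Z))\to H_1(S;\Z)\to H_1(K;\Z)\to0 .
\]
Since $S$ is a subgroup of the free group $F_n$, it is free, so $H_2(S;\Z)=0$; this gives injectivity on the left. Next I would identify the terms. First, $H_1(S;\Z)=S/[S,S]=M_{\overline{\mathbf g}}$ and $H_1(K;\Z)=K_{\rm ab}$, and the last map is induced by $S\to K$; this is exactly $\theta_{\pi,\mathbf g}$ by \eqref{eq:theta-short}. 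Second, $H_0(K;M_{\mathbf g})=(M_{\mathbf g})_K=\Z\otimes_{\Z K}M_{\mathbf g}$, where $K$ acts on $M_{\mathbf g}$ through conjugation by $S$; this action factors through $S/R\cong K$, consistently with the $G$-action of \cref{prop:relation-module-properties}(a). The standard isomorphism $\Z Q\otimes_{\Z G}M\cong \Z\otimes_{\Z K}M$ for $Q=G/K$ then identifies this term with $\Z Q\otimes_{\Z G}M_{\mathbf g}$.

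It remains to check that every map is $\Z Q$-equivariant. The group $F_n$ acts by conjugation on the whole extension, since $R$ and $S$ are normal in $F_n$. The five-term sequence is natural, so each map is $F_n$-equivariant. On every term the action factors through $Q$: inner actions of $S$ are trivial on $H_*(S)$, and $K$ acts trivially on its own homology and on the coinvariants. Checking that these induced $Q$-actions agree with the stated module structures, in particular the coinvariant identification, is the main point where care is needed. I expect it to be routine.

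For part (b), I would pass to the stabilized tuple. Its relator subgroups are obtained by adjoining a new free generator $x_{n+1}$ mapping to $1$, namely $R'=\ker(F_{n+1}\to G)$ and $S'=\ker(F_{n+1}\to Q)$. By \cref{prop:relation-module-properties}(d) applied to $\overline{\mathbf g}$ over $Q$, we get $M_{\overline{\operatorname{st}(\mathbf g)}}\cong M_{\overline{\mathbf g}}\oplus\Z Q$. Concretely, the new summand is generated by the class of $x_{n+1}$, which lies in $S'$. Under $\theta_{\pi,\operatorname{st}(\mathbf g)}$, the element $x_{n+1}$ maps to $[1]=0$ in $K_{\rm ab}$. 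Because the splitting comes from the projection $F_{n+1}\to F_n$ killing $x_{n+1}$, which is compatible with the maps to $G$, the map $\theta_{\pi,\operatorname{st}(\mathbf g)}$ equals $\theta_{\pi,\mathbf g}\oplus0$. Taking kernels gives \eqref{eq:C-stabilization-short}.

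The one thing to verify here is that the isomorphism in (d) respects the evaluation map. I would argue this via the chain-level description in the proof of (b): the map $\theta$ is compatible with the retraction.

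For part (c), I would first show that Nielsen equivalence $\theta_{\mathbf h}=\theta_{\mathbf g}\circ\Phi$ gives $C_{\pi,\mathbf h}\cong C_{\pi,\mathbf g}$. The automorphism $\Phi$ maps $S_{\mathbf h}$ isomorphically onto $S_{\mathbf g}$ and commutes with the maps to $K$. It therefore induces a $Q$-equivariant isomorphism of the $M_{\overline{\cdot}}$ that intertwines the two maps $\theta$, and hence an isomorphism of their kernels. Now apply this to the stabilized tuples, which are generating $(n+1)$-tuples. Combined with part (b), it gives $C_{\pi,\mathbf g}\oplus\Z Q\cong C_{\pi,\mathbf h}\oplus\Z Q$ whenever $\operatorname{st}(\mathbf g)\sim_N\operatorname{st}(\mathbf h)$, which proves the contrapositive of (c).
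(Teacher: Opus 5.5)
Your proposal is correct and follows the paper's proof essentially step for step. For (a) you use the five-term sequence for $1\to R\to S\to K\to1$ with $H_2(S;\Z)=0$ and the identification $R/[R,S]\cong(M_{\mathbf g})_K\cong\Z Q\otimes_{\Z G}M_{\mathbf g}$. For (b) you use the splitting $\theta_{\pi,\operatorname{st}(\mathbf g)}=\theta_{\pi,\mathbf g}\oplus0$, and for (c) you transport the evaluation maps by the Nielsen automorphism. Your retraction $r\colon F_{n+1}\to F_n$ killing $x_{n+1}$, with $\theta_{\operatorname{st}(\mathbf g)}=\theta_{\mathbf g}\circ r$, cleanly justifies that the splitting in (b) is compatible with $\theta$; the paper only asserts this briefly.
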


\begin{proof}
For part~(a), apply the five-term homology sequence to
\eqref{eq:R-S-K-short}.  Since $R$ and $S$ are normal in $F_n$, the sequence is
$Q=F_n/S$-equivariant.  Moreover, $S$ is a subgroup of the free group $F_n$,
so $H_2(S;\Z)=0$.  Hence there is an exact sequence of $\Z Q$-modules
\[
 0\to H_2(K;\Z)\to R/[R,S]
 \to S/[S,S]\to K_{\rm ab}\to0.
\]
The quotient $R/[R,S]$ is the module of $K$-coinvariants of
$M_{\mathbf g}=R/[R,R]$.  With the bimodule convention above,
\[
 R/[R,S]
 \cong (M_{\mathbf g})_K
 \cong \Z Q\otimes_{\Z G}M_{\mathbf g}.
\]
This gives \eqref{eq:refined-five-term-short}.  By exactness, it also gives
\[
 C_{\pi,\mathbf g}
 \cong
 \frac{\Z Q\otimes_{\Z G}M_{\mathbf g}}
 {\operatorname{im}\bigl(H_2(K;\Z)\to
  \Z Q\otimes_{\Z G}M_{\mathbf g}\bigr)}
\]
as $\Z Q$-modules.

For part~(b),
\[
 M_{\operatorname{st}(\overline{\mathbf g})}
 \cong M_{\overline{\mathbf g}}\oplus\Z Q
\]
by \cref{prop:relation-module-properties}(d), and the additional free relation
maps to $0$ in $K_{\rm ab}$.  Hence
$\theta_{\pi,\operatorname{st}(\mathbf g)}=\theta_{\pi,\mathbf g}\oplus0$,
which gives \eqref{eq:C-stabilization-short}.

For part~(c), suppose the stabilized tuples are Nielsen equivalent.  If the
corresponding Nielsen automorphism is $\Phi$, then $\Phi$ carries the relation
subgroup for $\operatorname{st}(\mathbf h)$ to that for
$\operatorname{st}(\mathbf g)$ and likewise carries the relation subgroup of
the induced presentation of $Q$ to the corresponding one.  Moreover, for an
element $s$ of the latter subgroup,
\[
 \theta_{\operatorname{st}(\mathbf g)}(\Phi(s))
 =\theta_{\operatorname{st}(\mathbf h)}(s)\in K.
\]
If $\Phi_*$ denotes the induced isomorphism of quotient relation modules,
then the preceding identity says precisely that
\[
 \theta_{\pi,\operatorname{st}(\mathbf g)}\circ\Phi_*
 =\theta_{\pi,\operatorname{st}(\mathbf h)}.
\]
Thus $\Phi_*$ restricts to an isomorphism of the kernels of the two evaluation
maps.  Part~(b) then contradicts \eqref{eq:refined-obstruction-short}.
\end{proof}

\begin{rem}\label{rem:johnson-direction}
For
\[
 \rho:\Out(F_n)\twoheadrightarrow\GL(n,\Z),
\]
the kernel is the outer IA-group $IO_n$.  Thus a generating pair
$\mathbf g$ with linear image $(A,B)$ determines
\begin{equation}
 \theta_{\rho,\mathbf g}:
 M_{(A,B)}\twoheadrightarrow (IO_n)_{\rm ab}.
\label{eq:johnson-direction-short}
\end{equation}
The target $(IO_n)_{\rm ab}=H_1(IO_n;\Z)$ is the natural linearized quotient
of the outer IA-group.  Formula \eqref{eq:johnson-direction-short} suggests
searching for stable Nielsen invariants through this abelianization or through
congruence quotients, rather than directly over $\Z\Out(F_n)$.  We do not
pursue this here.
\end{rem}

The families in \cref{thm:main} realize neither obstruction, since
\cref{cor:families-stabilize} gives actual equivalences of their stabilized
triples.  \Cref{cor:evans-stable-nielsen} shows that the direct obstruction is
effective for other groups.  We do not presently know an example where the
quotient refinement separates once-stabilized tuples not already separated by
the direct relation module; we record it as a potentially more computable tool.

\section{Outlook and open problems}\label{sec:questions}

The bound $n\ge 8$ in \cref{thm:linear-nielsen,thm:main} comes from the
disjoint supports used in the construction and is not expected to be optimal.

\begin{qst}\label{qst:small-ranks}
For which ranks $3\le n\le7$ do $\Out(F_n)$ and $\SOut(F_n)$ have infinitely
many Nielsen equivalence classes of generating pairs?
\end{qst}

The main stabilization question can be formulated intrinsically in terms of
weakly redundant generating triples.

\begin{qst}\label{qst:redundant-connectivity}
Let $n\ge 3$.
\begin{enumerate}[label=\textup{(\alph*)}]
\item Is the graph $N_3^{\rm wr}(\Out(F_n))$ connected?
\item Is the graph $N_3^{\rm wr}(\SOut(F_n))$ connected?
\end{enumerate}
\end{qst}

For a two-generated group, this connectivity is equivalent to asking whether
any two generating pairs become Nielsen equivalent after one stabilization.  A
negative answer would give stable Nielsen invariants for the corresponding
outer automorphism group.

\begin{qst}\label{qst:relation-cancellation}
We do not know the answers to the following relation-module cancellation problems.
\begin{enumerate}[label=\textup{(\alph*)}]
\item Does there exist a two-generated group $G$ and ordered generating pairs
$\mathbf g,\mathbf h$ such that, as $\Z G$-modules,
\[
 M_{\mathbf g}\oplus\Z G
 \not\cong
 M_{\mathbf h}\oplus\Z G?
\]
\item Let $n\ge2$, and let $G$ be $\Out(F_n)$ or $\SOut(F_n)$.  Do there
exist ordered generating pairs $\mathbf g,\mathbf h$ of $G$ satisfying the
same non-isomorphism?
\item For such a group $G$, do there exist an epimorphism
$\pi:G\twoheadrightarrow Q$ and ordered generating pairs
$\mathbf g,\mathbf h$ such that, as $\Z Q$-modules,
\[
 C_{\pi,\mathbf g}\oplus\Z Q
 \not\cong
 C_{\pi,\mathbf h}\oplus\Z Q?
\]
\end{enumerate}
\end{qst}

\Cref{cor:evans-stable-nielsen} gives non-cancellation after adjoining one free summand for
longer tuples, whereas \eqref{eq:KW-one-stable} gives the corresponding
rank-two group-theoretic inequivalence.  Thus
\cref{qst:relation-cancellation}(a) isolates a natural gap.  Parts~(b) and~(c)
provide algebraic approaches to the negative direction in
\cref{qst:redundant-connectivity}; conversely, weakly redundant connectivity
would force these one-step obstructions to vanish for the outer automorphism
groups.

\section{Disclosure of AI use}

AI-assisted tools were used in preparing preliminary drafts.  The author has
checked and edited the text and takes full responsibility for the paper.

\end{document}